\newtheorem{Theorem}{Theorem}[section]
\newtheorem{Proposition}[Theorem]{Proposition}
\newtheorem{Lemma}[Theorem]{Lemma}
\newtheorem{Remark}[Theorem]{Remark}
\newtheorem{Hypothesis}{Hypothesis}
\numberwithin{equation}{section}
\begin{document}

\def\le{\left}
\def\r{\right}
\def\cost{\mbox{const}}
\def\a{\alpha}
\def\d{\delta}
\def\ph{\varphi}
\def\e{\epsilon}
\def\la{\lambda}
\def\si{\sigma}
\def\La{\Lambda}
\def\B{{\cal B}}
\def\A{{\mathcal A}}
\def\L{{\mathcal L}}
\def\O{{\mathcal O}}
\def\bO{\overline{{\mathcal O}}}
\def\F{{\mathcal F}}
\def\K{{\mathcal K}}
\def\H{{\mathcal H}}
\def\D{{\mathcal D}}
\def\C{{\mathcal C}}
\def\M{{\mathcal M}}
\def\N{{\mathcal N}}
\def\G{{\mathcal G}}
\def\T{{\mathcal T}}
\def\R{{\mathbb R}}
\def\I{{\mathcal I}}

\def\bw{\overline{W}}
\def\phin{\|\varphi\|_{0}}
\def\s0t{\sup_{t \in [0,T]}}
\def\lt{\lim_{t\rightarrow 0}}
\def\iot{\int_{0}^{t}}
\def\ioi{\int_0^{+\infty}}
\def\ds{\displaystyle}
\def\pag{\vfill\eject}
\def\fine{\par\vfill\supereject\end}
\def\acapo{\hfill\break}

\def\beq{\begin{equation}}
\def\eeq{\end{equation}}
\def\barr{\begin{array}}
\def\earr{\end{array}}
\def\vs{\vspace{.1mm}   \\}
\def\rd{\reals\,^{d}}
\def\rn{\reals\,^{n}}
\def\rr{\reals\,^{r}}
\def\bD{\overline{{\mathcal D}}}
\newcommand{\dimo}{\hfill \break {\bf Proof - }}
\newcommand{\nat}{\mathbb N}
\newcommand{\E}{\mathbb E}
\newcommand{\Pro}{\mathbb P}
\newcommand{\com}{{\scriptstyle \circ}}
\newcommand{\reals}{\mathbb R}

\def\Amu{{A_\mu}}
\def\Qmu{{Q_\mu}}
\def\Smu{{S_\mu}}
\def\H{{\mathcal{H}}}
\def\Im{{\textnormal{Im }}}
\def\Tr{{\textnormal{Tr}}}
\def\E{{\mathbb{E}}}
\def\P{{\mathbb{P}}}
\def\span{{\textnormal{span}}}

\title{SPDEs on narrow channels and graphs: convergence and large deviations in case of non smooth noise}
\author{Sandra Cerrai\thanks{Partially supported by the NSF grant  DMS-1954299 - {\em Multiscale analysis of infinite-dimensional stochastic systems}}, Wen-Tai Hsu\\
\vspace{.1cm}\\
Department of Mathematics\\
 University of Maryland\\
College Park, Maryland, USA
}

\date{}

\maketitle

\begin{abstract}
We investigate a class of stochastic partial differential equations of reaction-diffusion type defined on graphs, which can be derived as the limit of SPDEs on narrow planar channels. 
In the first part, we demonstrate that this limit can be achieved under less restrictive assumptions on the regularity of the noise, compared to \cite{CF}. 
In the second part, we   establish the validity of a large deviation principle for the SPDEs on the narrow channels and on the graphs, as the width of the narrow channels and the intensity of the noise are jointly vanishing.
\end{abstract}

\section{Introduction}

In the present paper, we deal with the following stochastic reaction-diffusion equations\begin{equation}\label{narrow SPDE}
    \begin{dcases}
        \displaystyle \partial_t u_\epsilon  (t,x,y)
        = \mathcal{L}_\epsilon u_\epsilon (t,x,y)
        + b(u_\epsilon(t,x,y))
        + \partial_t w^Q (t,x,y),
        \ \ \ \  (x,y) \in G, \\[10pt]
        \displaystyle \frac{\partial u_\epsilon }{\partial \nu_\epsilon} (t,x,y) = 0 , \ \ \ \ (x,y) \in 	\partial G, \ \ \ \ \ 
        u_\epsilon(0,x,y)=u_0 (x,y),
    \end{dcases}
\end{equation}
depending on a small parameter $\e>0$, where
\begin{equation*}
    \mathcal{L}_\epsilon \coloneqq \frac{1}{2} \left(\frac{\partial^2 }{\partial x^2} + \frac{1}{\epsilon^2} \frac{\partial^2 }{\partial y^2}\right).
\end{equation*}
Here $G$ is a bounded smooth domain in $\mathbb{R}^2$, $\nu_\e(x,y)$ is the unit interior co-normal at $\partial G$ associated with $\mathcal{L}_\e$, and $w^Q(t)$ is a cylindrical Wiener process in $L^2(G)$.
Under suitable assumptions on the nonlinearity $b$ and the covariance operator $Q$, it can be proved that equation \eqref{narrow SPDE} is well-posed in $L^2(G)$. With the change of variable 
\[(x,y) \in\,G\mapsto (x, \epsilon y) \in\,G_\epsilon:=\{(x,y) \in\,\reals^2\,:\,(x,\epsilon^{-1}y) \in\,G\},\]
equation \eqref{narrow SPDE} can be rewritten as  the following stochastic reaction-diffusion equations on the narrow channel $G_\e$
\begin{equation}\label{narrow SPDE_var}
    \begin{dcases}
        \displaystyle \partial_t v_\epsilon(t,x,y)
        = \frac{1}{2}\, \Delta v_\epsilon (t,x,y)
        + b(v_\epsilon(t,x,y))
        + \sqrt{\epsilon}\,\partial_t w^{Q_\epsilon}(t,x,y),
        \ \  \ \ (x,y) \in G_\epsilon,\\[10pt]
        \displaystyle \frac{\partial v_\epsilon }{\partial \hat{\nu}_\epsilon} (t,x,y) = 0 , \ \ \ \ (x,y) \in 	\partial G_\epsilon, \ \ \  \ \ \ \ \ \ 
        v_\epsilon(0,x,y)=u_0 (x,\epsilon^{-1}y).
    \end{dcases}
\end{equation}
Here  $\hat{\nu}_\e(x,y)$ is the inward unit normal vector at $\partial G_\e$ and $Q_\epsilon:=R_\epsilon\, Q\,R_\epsilon^{-1}$, where the operator $R_\epsilon:L^2(G)\to L^2(G_\epsilon)$ is defined by 
\[(R_\epsilon f)(x,y):=f(x,y/\epsilon),\ \ \ \ \ f \in\,L^2(G),\ \ \ \ (x,y) \in\,G_\e.\]
Equations like \eqref{narrow SPDE} and \eqref{narrow SPDE_var} can serve as models for Brownian motors (ratchets) in statistical physics and molecular dynamics in biology, where the particles or molecules move along a designated track which can be viewed as a tubular domain with several wings. See e.g. \cite{Molecular} and \cite{Brownian}, as well as \cite{fhb} for some details about these systems. In recent years there has been  a substantial interest in the study of deterministic and stochastic PDEs defined on narrow domains and graphs. However, the existing literature about SPDEs on graphs is relatively limited. In addition to \cite{CF}, where the limiting behavior of systems like  \eqref{narrow SPDE} and \eqref{narrow SPDE_var} has been investigated and suitable SPDEs on graphs have been obtained, we recall \cite{CerraiFreidlin2019} and \cite{CerraiXi2021}, where suitable SPDEs on graphs have been obtained from the fast flow asymptotics of stochastic incompressible planar viscous fluids. Additionally,  different viewpoints and approaches to SPDEs on graphs have been explored in works such as \cite{bona} and \cite{Fan2017}.

In the aforementioned paper \cite{CF}, the first named author and M. Freidlin have studied  the limiting behavior of  particles/molecules moving in narrow channels with wings, as the width of the channels and wings vanishes, by working directly with equation \eqref{narrow SPDE}. 
They have shown that $u_\epsilon$ converges  to the solution of an SPDE defined on the graph $\Gamma$, obtained by identifying all points in the same connected component $C_k(x)$ of the cross section
\[C(x):=\{ y \in\,\mathbb{R}\ :\ (x,y) \in\,G\}=:\bigcup_{k=1}^{N(x)} C_k(x),\]
for all $x \in\,\mathbb{R}$ such that there exists $y \in\,\mathbb{R}$ with $(x,y) \in\,G$
(see Figure 1 and  Section \ref{secprelim} for all details). 
More precisely, for every $g:G\to\mathbb{R}$ they have defined the function $g^\wedge:\Gamma\to\mathbb{R}$  by setting
\begin{equation*}
    g^\wedge(x,k):= \frac{1}{l_k(x)} \int_{C_k(x)} g(x,y) dy, \ \ \ \ \ \ \ \ (x,k) \in \Gamma,
\end{equation*} and  have proved that
for every $p \geq 1$ and  $0<\tau<T$\begin{equation}
\label{convintro}
 \lim_{\epsilon \to 0} \mathbb{E} \sup_{t \in\,[\tau,T]} \abs{u_\e(t) - \bar{u}(t)\circ \Pi}_{L^2(G)}^p =    \lim_{\epsilon \to 0} \mathbb{E} \sup_{t \in\,[\tau,T]} \abs{u_\e^\wedge(t) - \bar{u}(t)}_{L^2(\Gamma,\,\nu)}^p = 0,
\end{equation}
where $\Pi$ is the projection of $G$ into $\Gamma$, $\nu$ is a suitable projection on $\Gamma$ of the Lebesgue measure on $G$, that is consistent with the construction of the graph $\Gamma$ (see Subsection 2.3) 
and 
$\bar{u}$ is the solution of the stochastic PDE on the graph $\Gamma$
\begin{equation}
\label{graph SPDE}
        \partial_t\bar{u}(t) = \bar{L}\bar{u}(t)+b(\bar{u}(t)) + \partial_t \bar{w}^Q(t), \ \ \ \ \ \ \ \ \ 
        \bar{u}(0)=u^\wedge_0.
\end{equation}
Here $\bar{w}^Q(t)$ is the projection of the cylindrical Wiener process $w^Q(t)$ on the graph $\Gamma$, and $\bar{L}$ is the generator of a Markov process $\bar{Z}(t)$ on $\Gamma$, obtained in \cite{fw12} as the weak limit in $C([0,T];\Gamma)$ of  $\Pi(Z_\e(t))$, the projection on the graph $\Gamma$ of the  diffusion $Z_\epsilon(t)$ associated with the operator $\mathcal{L}_\epsilon$, endowed with reflecting boundary conditions at $\partial G$.

 The process $Z_\epsilon(t)$ has a fast vertical component, whose invariant measure is given by the normalized Lebesgue measure, and, as a consequence of an averaging principle, Freidlin and Wentcell proved in \cite{fw12} that the process $\Pi(Z_\e(t))$, that describes the slow component of the motion, converges to the Markov process $\bar{Z}(t)$, as $\e\downarrow 0$. Moreover, they gave an explicit description of the  generator $\bar{L}$, in terms of some second order differential operators on each edge of $\Gamma$ and suitable gluing conditions at each internal vertex (see Section \ref{secprelim}).
 Starting from these results, the authors of \cite{CF} have studied the limiting behavior of the semigroup $S_\e(t)$ associated with the diffusion $Z_\epsilon(t)$,  and defined as
 \[S_\epsilon(t)\varphi(z)=\mathbf{E}_z\varphi(Z_\epsilon(t)),\ \ \ \ \ \ z \in\,G,\]
 for every $\varphi:G\to\mathbb{R}$, Borel and bounded. Namely, they have proved that  for every $\varphi \in\,L^2(G)$ and $0<\tau<T$
 \begin{equation}\label{star-40}\lim_{\e\to 0}\vert S_\epsilon(t)\varphi-(\bar{S}(t)\varphi^\wedge)\circ \Pi\vert_{L^2(G)}=0.\end{equation}
Since $u_\epsilon$ satisfies the equation
\begin{equation*}
    u_\epsilon(t) = S_\epsilon(t) u_0 + \int_0^t S_\epsilon(t-s)B(u_\epsilon(s)) ds + w_\e(t),\ \ \ \ \ \ \e>0,
\end{equation*}  
and $\bar{u}(t)$ satisfies the equation
\begin{equation*}
    \bar{u}(t) = \bar{S}(t)u_0^\wedge+\int_0^t\bar{S}(t-s)B(\bar{u}(s))ds+ w_{\bar{L}}(t),
\end{equation*}
where
\[w_\e(t):=\int_0^t S_\epsilon(t-s) dw^Q(s),\ \ \ \ \ \ \ w_{\bar{L}}(t):=\int_0^t \bar{S}(t-s) d\bar{w}^Q(s),\]
limit \eqref{star-40} has allowed them to show that if the noises $w^Q(t)$ and $\bar{w}^Q(t)$ are regular enough to live in $L^2(G)$ and $L^2(\Gamma, \nu)$, respectively, then \eqref{convintro} holds. 

Notice that the assumption that both $w^Q(t)$ and $\bar{w}^Q(t)$ live in functional spaces was fundamental in \cite{CF}. Firstly,  the well-posedness of equation \eqref{graph SPDE} required that the noise $\bar{w}^Q(t)$ was function-valued, as no-smoothing effect of the semigroup $\bar{S}(t)$ was proven. Secondly, and more importantly, limit \eqref{star-40} did not allow to prove the convergence of the stochastic convolution $w_\e(t)$ to $w_{\bar{L}}(t)$.
In the first part of the present paper, we are addressing these problems. In particular,  we  prove that,  under the minimal assumptions on the noise $w^Q(t)$ required for the well-posedness of equation \eqref{narrow SPDE} in the space $L^2(G)$, suitable kernel estimates for the semigroup $S_\e(t)$ transfer to analogous kernel estimates for $\bar{S}(t)$, allowing us to have the well-posedness of equation \eqref{graph SPDE}. Moreover, in view of these estimates, we show  that limit \eqref{convintro} is still valid.

In the second part of the paper, we study the validity of a large deviation principle for equation \eqref{narrow SPDE} and we show how  approximation \eqref{convintro} is stable with respect to the small-noise limit. Actually, we fix an arbitrary $\delta>0$ and we consider the equation in $G$
\begin{equation}\label{narrow SPDE-delta}
    \begin{dcases}
        \displaystyle \partial_t u_\epsilon  (t,x,y)
        = \mathcal{L}_\epsilon u_\epsilon (t,x,y)
        + b(u_\epsilon(t,x,y))
        + \e^{\,\delta/2}\partial_t w^Q (t,x,y),
        \ \ \ \  (x,y) \in G, \\[10pt]
        \displaystyle \frac{\partial u_\epsilon }{\partial \nu_\epsilon} (t,x,y) = 0 , \ \ \ \ (x,y) \in 	\partial G, \ \ \ \ \ 
        u_\epsilon(0,x,y)=u_0 (x,y),
    \end{dcases}
\end{equation}
or, equivalently, the equation  on the narrow channel $G_\epsilon$
\begin{equation}\label{narrow SPDE_var-delta}
    \begin{dcases}
        \displaystyle \partial_t v_\epsilon(t,x,y)
        = \frac{1}{2}\, \Delta v_\epsilon (t,x,y)
        + b(v_\epsilon(t,x,y))
        + \e^{\,(1+\delta)/2}\,\partial_t w^{Q_\epsilon}(t,x,y),
        \ \  \ \ (x,y) \in G_\epsilon,\\[10pt]
        \displaystyle \frac{\partial v_\epsilon }{\partial \hat{\nu}_\epsilon} (t,x,y) = 0 , \ \ \ \ (x,y) \in 	\partial G_\epsilon, \ \ \  \ \ \ \ \ \ 
        v_\epsilon(0,x,y)=u_0 (x,\epsilon^{-1}y).
    \end{dcases}
\end{equation}
Our aim is proving that for every $ 0 < \tau < T$, the family $\{\mathcal{L}(  u_\epsilon )\}_{\e \in\,(0,1)}$ satisfies a large deviation principle in $C([\tau,T];L^2(G))$, with speed $\e^\delta$ and action functional 
    \begin{equation}\label{star-41}
       I_T(f)=\frac{1}{2}\, \inf  \abs{\varphi}_{L^2(0,T;L^2(G))}^2,
    \end{equation}
    where the infimum is taken over all $\varphi \in L^2(0,T;L^2(G))$ such that $f=(\bar{u}^\varphi)|_{[\tau,T]}\circ \Pi \in \,C([\tau,T];L^2(G))$ and $\bar{u}^\varphi$ is the solution to the controlled problem on the graph $\Gamma$
    \begin{equation*}
    \begin{dcases}
        \displaystyle \partial_t\bar{u}^{\varphi}(t,x,k) = \bar{L}\bar{u}^{\varphi}(t,x,k)+b(\bar{u}^{\varphi}(t,x,k))+ (Q\varphi)^\wedge (t,x,k),\\[10pt]
        \displaystyle \bar{u}^{\varphi}(0,x,k)=u_0^\wedge(x,k).
    \end{dcases}
\end{equation*}
In the same way, we obtain that the family $\{\mathcal{L}(  u_\epsilon^\wedge )\}_{\e \in\,(0,1)}$ satisfies a large deviation principle in $C([\tau,T];L^2(\Gamma,\,\nu))$ with speed $\e^\delta$ and action functional $\bar{I}_T$, where  $\bar{I}_T$ is defined as in \eqref{star-41}, with the infimum taken now over all  $\varphi \in L^2(0,T;L^2(G))$ such that $f=(\bar{u}^\varphi)|_{[\tau,T]} \in \,C([\tau,T];L^2(\Gamma,\,\nu))$.  
In particular, this means that limit \eqref{convintro} is consistent with respect the small-noise limit, in the sense that the large deviation  principle for the SPDE on the domain $G$ and, equivalently, on the narrow channel $G_\e$, is described by the same large deviation principle that one has for the SPDEs with small noise on the graph.

\medskip

\medskip

The paper is organized as follows. In Section \ref{secprelim}, we begin by reviewing the assumptions on the domain $G$ and we construct the graph $\Gamma$. We then provide a brief summary of existing results concerning the convergence of $\Pi(Z_\epsilon)$ to $\bar{Z}$,  the convergence of the semigroup $S_\e(t)$ to the semigroup $\bar{S}(t)$ and the convergence of the solution  of equation \eqref{narrow SPDE} to the solution of equation \eqref{graph SPDE}.
Section \ref{secQ} is devoted to the proof of the convergence of the solution of \eqref{narrow SPDE} to the solution of equation \eqref{graph SPDE} under weaker regularity conditions for the noise. In Section \ref{secLDP}, we state the large deviation principle result and give an overview of the weak convergence method and introduce the skeleton equation within our specific framework.
Finally, in Section  \ref{secpriori} and  Section \ref{secpf} we provide the proof of the large deviation principle.


\section{Notations and preliminaries}
\label{secprelim}

In this section, we present a concise overview of the notations we are going to use throughout the paper and recall some preliminary results from \cite{CF} and \cite{fw12}.

\subsection{The domain $G$, the narrow channel $G_\e$ and the graph $\Gamma$}

Let $G$ be a bounded open domain in $\reals^2$, having a  smooth boundary $\partial G$ and satisfying the uniform exterior sphere condition. For any $(x,y) \in\,\partial G$,  we denote by $\nu(x,y)$  the unit inward normal vector at the point $(x,y)$.

Now, for every $\epsilon > 0$, we introduce the narrow channel $G_\epsilon$ associated with $G$
\begin{equation*}
    G_\epsilon \coloneqq \{ (x,y) \in \mathbb{R}^2 :(x,\epsilon^{-1}y) \in G \},
\end{equation*}
and we denote by $\nu^\epsilon (x,y)$ the unit inward normal vector at the point $(x, y) \in \partial G_\epsilon$. Notice that
\begin{equation*}
    \nu^\epsilon (x, \epsilon y) = c_\epsilon(x,y) (\epsilon \nu_1 (x,y) , \nu_2 (x,y) ), \ \ \ (x,y) \in \partial G,
\end{equation*}
for some function $c_\epsilon : \partial G \to [1,+\infty)$ such that 
\begin{equation*}
    \sup_{ \substack{x, y \in \partial G \\  \epsilon > 0 }} c_\epsilon (x,y) = c < +\infty.
\end{equation*}

In what follows, we assume that the region $G$ satisfies the following properties
\begin{enumerate}
\item[-] There exist only finitely many $x \in \mathbb {R}$ such that $(x,y) \in \partial G$ and $\nu_2(x,y) = 0$. 
\item[-] For every $x \in \mathbb {R}$, the cross-section $C(x) = \{ y \in \mathbb{R}: (x,y) \in G \}$ consists of only a finite union of intervals. 
Namely, when $C(x) \neq \emptyset$, there exist $N(x) \in \mathbb{N}$ and intervals $C_1(x) , \cdots , C_{N(x)}(x)$ such that
\begin{equation*}
    C(x) = \bigcup_{k=1}^{N(x)} C_k(x).
\end{equation*}
\item[-] If $x \in \mathbb {R}$ is such that $\nu_2(x,y) \neq 0$, then for any $k=1,\cdots, N(x)$ we have
\begin{equation*}
    l_k(x) = \abs{C_k(x)} >0 .
\end{equation*}
\end{enumerate}

By identifying the points within each connected component $C_k(x)$ of each cross section $C(x)$, we can construct a graph $\Gamma$. This graph consists of a finite number of vertices $O_i$, corresponding to the connected components containing points $(x, y) \in \partial G$ such that $\nu_2(x,y) = 0$, and with a finite number of edges $I_k$, connecting the vertices. 
On our graph there are two different types of vertices, exterior ones, that are connected to only one edge of the graph, and interior ones, that are connected to two or more edges. See Figure 1.

On the graph $\Gamma$, we can introduce a notion of distance as follows. If $z_1 = (x_1, k )$ and $z_2 = (x_2, k)$ belong to the same edge $I_k$ , then their distance is given by $d(z_1,z_2) = \abs{x_1-x_2}$. In the case when $z_1$ and $z_2$ belong to different edges, the distance is determined as
\begin{equation*}
    d(z_1,z_2) = \min \{  d(z_1,O_{i_1})+d(O_{i_1},O_{i_2}) + \cdots + d(O_{i_n},z_2)  \},
\end{equation*}
where the minimum is taken over all possible paths from $z_1$ to $z_2$, through every possible sequence of vertices $O_{i_1}, \cdots, O_{i_n}$, that connects $z_1$ to $z_2$.

Now, any point $z$ on the graph $\Gamma$ can be uniquely characterized by two coordinates: the horizontal coordinate $x$ and the integer $k$ which identifies the edge $I_k$ to which $z$ belongs. It is worth noting that if $z$ is an interior vertex $O_i$, the second coordinate may not have a unique choice since there are two or more edges having $O_i$ as their endpoint.

In the upcoming discussion, we define the mapping $\Pi: G \to \Gamma$ as the identification map that associates points in the domain $G$ to their respective locations on the graph $\Gamma$. For any vertex $O_i$ on the graph $\Gamma$, we denote by $E_i$ the set $\Pi^{-1}(O_i)$ consisting of points $(x,y) \in \partial G$ such that $\nu_2(x,y) = 0$. The set $E_i$ can be one point, several points or an interval. Henceforth, we assume that $G$
satisfies the following condition: 
\begin{enumerate}
  \item[-] For each vertex $O_i$, either $\nu_1(x,y)>0$, for all $(x,y) \in E_i$, or $\nu_1(x,y)<0$, for all $(x,y) \in E_i$.
\end{enumerate}

\begin{figure}{}
\centering
\includegraphics[scale=1]{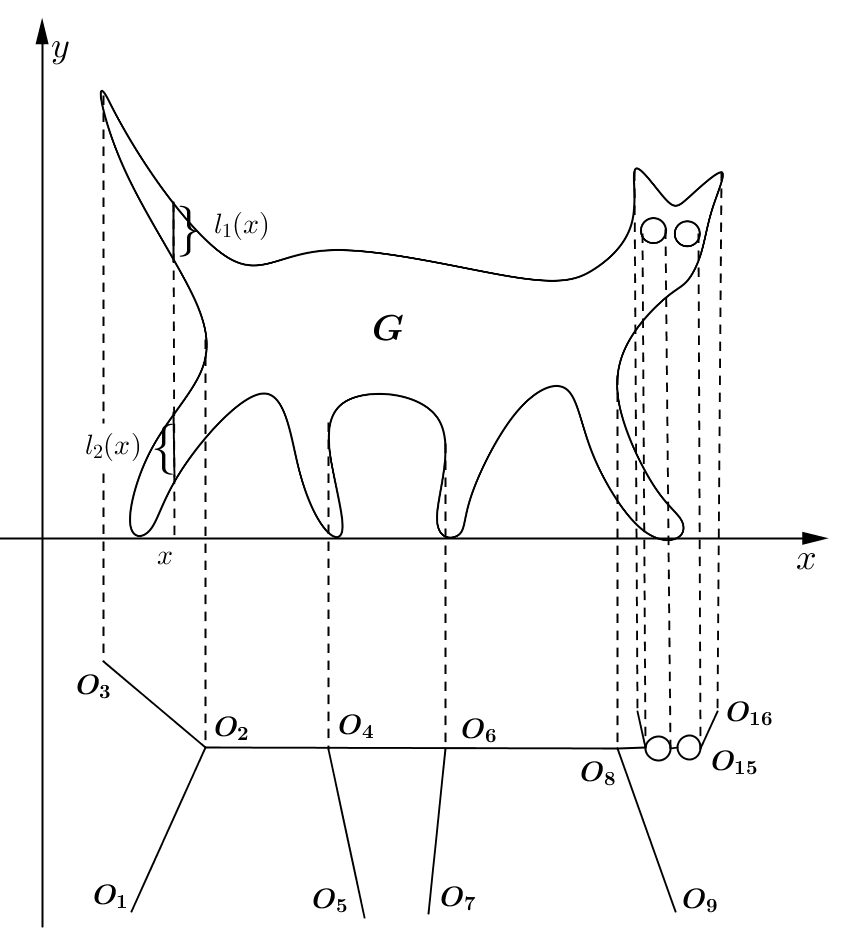}
\caption{The domain $G$ and the graph $\Gamma$}
\end{figure}

\subsection{Limiting results for the transition semigroup}
In order to understand the limiting behavior of the differential operator $\mathcal{L}_\epsilon$, endowed  with the Neumann boundary condition, and its associated semigroup $S_\epsilon(t)$, we consider the following stochastic differential equation with reflecting boundary conditions on the domain $G$
\begin{equation}
    \begin{dcases}
        \displaystyle dX^\epsilon=dB_1(t)
        +\nu_1(X^\epsilon(t), Y^\epsilon(t))d\phi^\epsilon(t), \ \ \ \ X^\epsilon(0) = x, \\[10pt]
        \displaystyle dY^\epsilon=\frac{1}{\epsilon}\,dB_2(t)
        +\frac{1}{\epsilon^2}\,\nu_2(X^\epsilon(t), Y^\epsilon(t))\,d\phi^\epsilon(t), \ \ \ \ Y^\epsilon(0) = y,
    \end{dcases}
\end{equation}
where $(x,y) \in G$.
Such an equation can be rewritten as
\begin{equation*}
    dZ^\epsilon(t) = \sqrt{\sigma_\epsilon}\, dB(t) + \sigma_\epsilon\, \nu (Z^\epsilon(t))\, d\phi^\epsilon(t), \ \ \ \ Z^\epsilon(0) = z,
\end{equation*}
where $z=(x,y) \in G$ and
\begin{equation*}
    \sigma_\epsilon = 
    \begin{pmatrix}
        1 & 0 \\
        0 & \epsilon^{-2}
    \end{pmatrix}.
\end{equation*}
Here $B(t)=(B_1(t),B_2(t))$ is a $2$-dimensional Brownian motion defined on a stochastic basis $(\mathbf{\Omega}, \mathbf{\mathcal{F}}, \{\mathbf{\mathcal{F}}\}_{t\geq 0}, \mathbf{P})$ and $\phi^\epsilon(t)$ is the local time of the process $Z^\epsilon(t)$ on the boundary $\partial G$.  That is, $\phi^\epsilon(t)$ is an  adapted process, continuous with probability 1, non-decreasing and increasing only when $Z^\epsilon(t) \in \partial G$. 
Formally, one can write 
\begin{equation*}
    \phi^\epsilon(t) = \int_0^t  I_{\{ Z^\epsilon(s) \in \partial G \} } d\phi^\epsilon(s).
\end{equation*}
One can show that the generator of $Z^\epsilon$ is $\mathcal{L}_\epsilon$, endowed  with Neumann boundary conditions. 

In \cite{fw12},  a continuous Markov process $\bar{Z}(t)$ on $\Gamma$ is introduced. Its generator  $\bar{L}$ is given by suitable differential operators $\bar{\mathcal{L}}_k$ within each edge $I_k = \{ (x,k) : a_k \leq x \leq b_k \}$ of the graph and is subject to certain gluing conditions at the vertices $O_i$. More precisely, for each $k$ the differential operator $\bar{\mathcal{L}}_k$ has the form 
\begin{equation}
    \bar{\mathcal{L}}_k f(x) = \frac{1}{2l_k(x)} \frac{d}{dx} \left( l_k \frac{df}{dx}\right)(x), \ \ \ a_k < x < b_k,
\end{equation}
and the operator $\bar{L}$, acting on functions $f$ defined on the graph $\Gamma$, is defined as
\begin{equation*}
    \bar{L}f(x,k) 
        =\bar{\mathcal{L}}_k f(x), \ \ \ \text{if $(x,k)$ is an interior point of $I_k$}.
\end{equation*}
The domain $D(\bar{L})$ is defined as the set of continuous functions on the graph $\Gamma$, that are twice continuously differentiable in the interior part of each edge, such that for any vertex $O_i=(x_i,k_{i,1})=\cdots=(x_i,k_{i,n_i})$ there exists finite
\begin{equation*}
    \lim_{(x,k_{i,j}) \to O_i} \bar{L}f(x,k_{i,j}).
\end{equation*}
Moreover, the following one-sided limits exist 
\begin{equation*}
    \lim_{x \to x_i}  l_{k_{i,j}}(x) \frac{df}{dx}(x,k_{i,j}) 
\end{equation*}
along any edge $I_{k_{i,j}}$ ending at the vertex $O_i = (x_i,k_{i,j})$
and the following gluing condition at each vertex $O_i$ is satisfied
\begin{equation}\label{gluing}
    \sum_{k:I_k \sim O_i} \lim_{x \to x_i} \left( \pm l_k(x) \frac{df}{dx}(x,k) \right) = 0,
\end{equation}
where the sign $+$ is taken for right limits and the sign $-$ for left limits.
In the case of an exterior vertex $O_i$, the gluing condition \eqref{gluing} reduces to
\begin{equation}
    \lim_{x \to x_i}  l_k(x) \frac{df}{dx}(x,k)  = 0,
\end{equation}
along the only edge $I_k$ terminating at $O_i$.

In what follows, the transition semigroup associated with the process $\bar{Z}(t)$ is denoted by   $\bar{S}(t)$. In other words, for any Borel and bounded function $f: \Gamma \to \mathbb{R}$, we have
\begin{equation*}
    \bar{S}(t)f(x,k) = \bar{\mathbf{E}}_{(x,k)}f(\bar{Z}(t)),
\end{equation*}
where $\bar{\mathbf{E}}_{x,k}$ is the expectation associated with the Markov process $\bar{Z}(t)$ starting at $(x,k) \in \Gamma$.

In \cite{fw12}   the convergence in distribution of the non-Markov process $\Pi(Z_\epsilon(t))$, which characterizes the slow motion of the process, to the Markov process $\bar{Z}(t)$ in the space $C([0,T];\Gamma)$ is proven. 
\begin{Theorem}\cite[Theorem 1.2]{fw12}
    For any bounded and continuous functional $F$ on $C([0,T];\Gamma)$ and $z \in G$ it holds
    \begin{equation*}
    \lim_{\epsilon \to 0} \mathbf{E}_z F(\Pi(Z^\epsilon(\cdot))) = \bar{\mathbf{E}}_{\Pi(z)} F(\bar{Z}(\cdot)).
	\end{equation*}
\end{Theorem}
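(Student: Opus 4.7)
The plan is to prove weak convergence of $\Pi(Z^\epsilon(\cdot))$ to $\bar{Z}(\cdot)$ on $C([0,T];\Gamma)$ through the classical two-step scheme: first establish tightness of $\{\mathcal{L}_z(\Pi(Z^\epsilon(\cdot)))\}_{\epsilon>0}$, and then identify every subsequential weak limit as the unique solution of the martingale problem associated with $(\bar{L}, D(\bar{L}))$ starting at $\Pi(z)$.

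For tightness, I would work with the horizontal component $X^\epsilon$, whose SDE reads $dX^\epsilon = dB_1 + \nu_1(Z^\epsilon)d\phi^\epsilon$, that is, it carries no $\epsilon^{-2}$ singularity except through the boundary local time $\phi^\epsilon$. Standard reflecting-diffusion estimates give uniform control of $\mathbf{E}_z|\phi^\epsilon(t+h)-\phi^\epsilon(t)|^p$ in terms of $h$, which together with Aldous' criterion yields tightness of $\{X^\epsilon\}$ in $C([0,T];\mathbb{R})$. Since $\Pi$ is Lipschitz along horizontal segments away from the critical vertices and the graph distance is bounded by the Euclidean one, this transfers to tightness of $\Pi(Z^\epsilon)$ in $C([0,T];\Gamma)$, modulo an additional argument at the finitely many $x$-values where $\nu_2=0$, where one controls the exit time from small neighborhoods of the $E_i$ by ellipticity of $\mathcal{L}_\epsilon$.

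For the identification step, I would fix $f \in D(\bar{L})$ and apply Itô's formula to $f\circ\Pi(Z^\epsilon(t))$. Away from small $\delta$-neighborhoods of the vertices, the vertical component $Y^\epsilon(t)$ evolves with the fast drift $\epsilon^{-2}\nu_2 d\phi^\epsilon$ and, at each frozen value of $x$, has as unique invariant measure the normalized Lebesgue measure on $C(x)$. An averaging argument, implemented either via the associated Poisson equation or via time-scale separation of cell averages, replaces the $x$-derivative contributions by their $y$-averages across $C_k(x)$, producing exactly $\bar{\mathcal{L}}_k f(x) = \frac{1}{2l_k(x)}\frac{d}{dx}\bigl(l_k\, f'\bigr)(x)$ on each edge, since the boundary-reflection factors rearrange into the weighted divergence form through an integration by parts against $l_k$.

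The main obstacle, which is where the structure of $D(\bar{L})$ really enters, is handling the vanishing neighborhoods of the interior vertices $O_i$, where excursions of the fast component make $Z^\epsilon$ jump between the adjacent components $C_{k_{i,j}}(x)$ and where the drift contribution is not small a priori (the time spent near $O_i$ and the local-time rate scale against each other). I would resolve this by an excursion-theoretic argument analogous to that in \cite{fw12}: show that the average over short time intervals of the boundary term $\nabla f\circ\Pi \cdot \nu_\epsilon\, d\phi^\epsilon$ collected near $O_i$, when weighted by the occupation frequencies of the components $C_{k_{i,j}}(x)$ (which are proportional to $l_{k_{i,j}}(x_i)$), reduces, thanks to the gluing condition \eqref{gluing} satisfied by $f\in D(\bar{L})$, to a negligible quantity as $\epsilon \downarrow 0$ and then as the neighborhood shrinks. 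Once this is proved, the process $f(\Pi(Z^\epsilon(t))) - \int_0^t \bar{L}f(\Pi(Z^\epsilon(s)))\,ds$ is asymptotically a martingale, the well-posedness of the $\bar{L}$-martingale problem established in \cite{fw12} selects $\bar{Z}$ as the unique limit point, and the stated convergence for every bounded continuous $F$ follows from the portmanteau theorem.
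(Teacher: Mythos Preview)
The paper does not prove this theorem at all: it is stated in Section~\ref{secprelim} purely as a citation of \cite[Theorem~1.2]{fw12}, with no argument given, and is used only as background input for the semigroup convergence in Theorem~\ref{prevconv}. So there is no ``paper's own proof'' to compare your proposal against.

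That said, what you have written is a reasonable high-level sketch of the Freidlin--Wentzell strategy from \cite{fw12}: tightness of the slow component via control of the local time, averaging over the fast vertical motion to produce $\bar{\mathcal{L}}_k$ on each edge, and an excursion analysis near the interior vertices where the gluing condition \eqref{gluing} kills the residual boundary contribution. If your goal were to reproduce that proof, the place where your outline is thinnest is precisely the vertex analysis: the statement that the occupation frequencies of the adjacent components are ``proportional to $l_{k_{i,j}}(x_i)$'' and that the boundary term then cancels by \eqref{gluing} is the heart of the matter in \cite{fw12}, and making it rigorous requires a careful double limit (first $\epsilon\downarrow 0$, then shrinking the neighborhood) together with quantitative exit-time and hitting-distribution estimates near $E_i$. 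Your sketch acknowledges this but does not supply the mechanism. For the purposes of the present paper, however, none of this is needed: you may simply invoke \cite{fw12} as the authors do.
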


\subsection{Functions and operators on the graph $\Gamma$}
In this subsection, we recall the basic definitions of functions and operators defined  on $G$ and $\Gamma$, as introduced in  \cite{CF}, as well as some convergences results which we will use in what follows.

We denote by $H$ the Hilbert space $L^2(G)$, endowed with the scalar product  $\langle \cdot , \cdot \rangle_H$ and the  corresponding norm $\vert\cdot\vert_H$.
Moreover, we denote by $H^1$ the classical Sobolev space $W^{1,2}(G)$, endowed   with the norm
\begin{equation*}
    \abs{f}_{H^1} \coloneqq \abs{f}_H+\abs{\nabla f}_H.
\end{equation*}

Now we denote by $\bar{H}$  the space of measurable functions $f:\Gamma \to \mathbb{R}$ such that
\begin{equation*}
    \sum_{k=1}^N \int_{I_k} \abs{f(x,k)}^2 l_k(x) dx < +\infty
\end{equation*}
(here $N$ is the total number of edges of $\Gamma$).
The space $\bar{H}$ is a Hilbert space once  endowed with the inner product
\begin{equation*}
    \langle f,g \rangle_{\bar{H}} = \sum_{k=1}^N \int_{I_k} f(x,k)g(x,k)l_k(x) dx.
\end{equation*}
Note that if we introduce in $\Gamma$  the measure $\nu$ 
\begin{equation}
\label{nu}
    \nu(A):=\sum_{k=1}^N \int_{I_k} I_A(x,k)l_k(x) dx, \ \ \ A \in \mathcal{B}(\Gamma),
\end{equation}
we have that $\bar{H}=L^2(\Gamma,\nu)$.

Now, for any $u \in H$ we define
\begin{equation*}u^\wedge(x,k):=\frac{1}{l_k(x)}\int_{C_k(x)} u(x,y)dy, \ \ \ \ \ \ \ \  (x,k) \in \Gamma,
\end{equation*}
and for any $f\in \bar{H}$ we define
\begin{equation*}
    f^{\vee}(x,y):=f(\Pi(x,y)), \ \ \ \ \ \ \ \ (x,y)\in G.
\end{equation*}
Moreover, for any $Q \in \mathcal{L}(H)$ and $f \in \bar{H}$, we define
\begin{equation*}
    Q^\wedge f := \left(Q f ^\vee \right)^\wedge,
\end{equation*}
and, similarly, for any $A \in \mathcal{L}(\bar{H})$ and $u \in H$, we define
\begin{equation*}
    A^\vee u := \left(A u ^\wedge \right)^\vee.
\end{equation*}
Below, we summarize some properties which can be derived from the aforementioned definitions.
\begin{Proposition}\cite{CF}
\label{opprop}
For every $u \in H$ and $f \in \bar{H}$ we have
\begin{equation}   \label{star-7}\langle u^\wedge, f \rangle_{\bar{H}} 
            = \langle u, f^\vee \rangle_H,\ \ \ \ \ 
(f^{\vee})^{\wedge} = f.\end{equation}
In particular, if $\{ f_n \}_{n \in \mathbb{N}}$ is an orthonormal basis in $\bar{H}$, then $\{ f_n^\vee \}_{n \in \mathbb{N}}$ is an orthonormal basis in $H$. Moreover, 
\[\Vert f^{\vee}\Vert_ H=\Vert f\Vert_ {\bar{H}},\ \ \ \ \ \ \ 
    \Vert u^{\wedge}\Vert_ {\bar{H}} \leq \Vert u\Vert_ H.\]
Next, for every $Q \in \mathcal{L}(H)$ and $A \in \mathcal{L}(\bar{H})$ we have
\[\lVert Q^\wedge \rVert_{\mathcal{L}(\bar{H})} \leq \lVert Q \rVert_{\mathcal{L}(H)},\]
and
\begin{equation}  \label{star-6}  \lVert A^\vee \rVert_{\mathcal{L}(H)} \leq \lVert A \rVert_{\mathcal{L}(\bar{H})},\ \ \ \ \ \ \ \ 
   \left( A^\vee \right)^\wedge = A.\end{equation}
Finally,  if $Q \in \mathcal{L}_2(H)$, we have 
\[\lVert Q^\wedge \rVert_{\mathcal{L}_2(\bar{H})} \leq \lVert Q \rVert_{\mathcal{L}_2(H)}.\]
    \end{Proposition}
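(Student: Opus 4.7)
The plan is to prove every claim by unwinding the definitions, using the key structural fact that $f^\vee$ is constant on each cross-section component $C_k(x)$ with value $f(x,k)$, while $u^\wedge$ is the average of $u$ over that same component with weight $1/l_k(x)$. These two operations interact cleanly with the measure $\nu$ on $\Gamma$, which weights the edge $I_k$ by $l_k(x)\,dx$.

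First, the duality identity and the left-inverse relation. Decomposing $G$ into cross-sections and applying Fubini,
\begin{equation*}
\langle u^\wedge, f\rangle_{\bar H}
= \sum_k \int_{I_k} \frac{1}{l_k(x)}\!\left(\int_{C_k(x)} u(x,y)\,dy\right)\!f(x,k)\,l_k(x)\,dx
= \int_G u(x,y)\,f^\vee(x,y)\,dy\,dx,
\end{equation*}
since the $l_k(x)$ factors cancel and $f^\vee(x,y)=f(x,k)$ whenever $y\in C_k(x)$. The identity $(f^\vee)^\wedge=f$ is then immediate, as the cross-sectional average of a constant is that constant.

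Next, the norm relations. The same Fubini decomposition yields
\begin{equation*}
\lVert f^\vee\rVert_H^2 = \sum_k \int_{I_k}\!\int_{C_k(x)} |f(x,k)|^2\,dy\,dx = \sum_k \int_{I_k} |f(x,k)|^2\,l_k(x)\,dx = \lVert f\rVert_{\bar H}^2,
\end{equation*}
so $f\mapsto f^\vee$ is an isometry, while Cauchy--Schwarz applied to $\int_{C_k(x)} u(x,y)\,dy$ gives the pointwise bound $|u^\wedge(x,k)|^2\, l_k(x) \leq \int_{C_k(x)} |u(x,y)|^2\,dy$; summing in $k$ and integrating in $x$ yields $\lVert u^\wedge\rVert_{\bar H}\leq \lVert u\rVert_H$. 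The orthonormality of $\{f_n^\vee\}$ in $H$ is then a direct consequence of the duality combined with $(f^\vee)^\wedge=f$, since $\langle f_n^\vee, f_m^\vee\rangle_H = \langle f_n,(f_m^\vee)^\wedge\rangle_{\bar H} = \delta_{nm}$.

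The operator estimates follow by composition. Namely,
\begin{equation*}
\lVert Q^\wedge f\rVert_{\bar H} = \lVert (Qf^\vee)^\wedge\rVert_{\bar H} \leq \lVert Q f^\vee\rVert_H \leq \lVert Q\rVert_{\mathcal{L}(H)}\lVert f\rVert_{\bar H},
\end{equation*}
and symmetrically $\lVert A^\vee u\rVert_H = \lVert A u^\wedge\rVert_{\bar H} \leq \lVert A\rVert_{\mathcal{L}(\bar H)}\lVert u\rVert_H$; the identity $(A^\vee)^\wedge=A$ reduces on test elements to two applications of $(g^\vee)^\wedge=g$. Finally, for the Hilbert--Schmidt bound, fix any orthonormal basis $\{e_n\}$ of $\bar H$; since the orthonormal system $\{e_n^\vee\}\subset H$ can be completed to an orthonormal basis of $H$, Parseval gives
\begin{equation*}
\lVert Q^\wedge\rVert_{\mathcal{L}_2(\bar H)}^2 = \sum_n \lVert Q^\wedge e_n\rVert_{\bar H}^2 = \sum_n \lVert (Q e_n^\vee)^\wedge\rVert_{\bar H}^2 \leq \sum_n \lVert Q e_n^\vee\rVert_H^2 \leq \lVert Q\rVert_{\mathcal{L}_2(H)}^2.
\end{equation*}
No step is genuinely obstructive; the only conceptual subtlety is that $u\mapsto u^\wedge$ is \emph{not} injective (cross-sectional oscillations are annihilated), so $\{f_n^\vee\}$ is orthonormal in $H$ but generally not complete there, which does no harm in the Hilbert--Schmidt step or in any later use of these identities.
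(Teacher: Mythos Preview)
The paper does not prove this proposition; it simply cites \cite{CF}. Your argument is correct and is essentially the standard one: Fubini over the fibration $G=\bigcup_{k}\bigcup_{x\in I_k}\{x\}\times C_k(x)$ yields the duality $\langle u^\wedge,f\rangle_{\bar H}=\langle u,f^\vee\rangle_H$, from which all the norm and operator bounds follow by composition, and Cauchy--Schwarz on each fibre gives $\lVert u^\wedge\rVert_{\bar H}\le\lVert u\rVert_H$.

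One remark worth making explicit. You correctly observe that $\{f_n^\vee\}$ is an orthonormal \emph{system} in $H$ but not a complete one, since any $u\in H$ with vanishing fibre averages is orthogonal to the range of $\vee$. The proposition as stated in the paper says ``orthonormal basis in $H$'', which is too strong; your reading is the right one, and nothing in the rest of the paper actually uses completeness of $\{f_n^\vee\}$ in $H$ (the Hilbert--Schmidt estimate only needs that the system can be extended to a basis, exactly as you argue).
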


Once introduced all these notations, we can state the convergence result of the semigroup $S_\epsilon(t)$ to the semigroup $\bar{S}(t)$.\begin{Theorem}\cite[Corollary 5.3 and Theorem 5.4]{CF}
\label{prevconv}
	For any $0 <\tau <T$, $\varphi \in C(\bar{G})$ and $z \in G$, we have
    \begin{equation*}
        \lim_{\epsilon \to 0}\, \sup_{t \in [\tau,T]} \abs{S_\epsilon(t)\varphi(z)-\bar{S}(t)^{\vee}\varphi(z)}=0.
    \end{equation*}
 For every $\varphi \in H$ it holds
    \begin{equation*}
        \lim_{\epsilon \to 0}\, \sup_{t \in [\tau,T]} \abs{S_\epsilon(t)\varphi-\bar{S}(t)^{\vee}\varphi}_H
        =\lim_{\epsilon \to 0}\, \sup_{t \in [\tau,T]} \abs{(S_\epsilon(t)\varphi)^{\wedge}-\bar{S}(t)\varphi^{\wedge}}_{\bar{H}}=0.
    \end{equation*}
    Moreover, $\bar{S}(t)$ extends to a contraction semigroup on $\bar{H}$.
\end{Theorem}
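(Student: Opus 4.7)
I would structure the argument in three stages. First, establish that $\bar{S}(t)$ extends to a contraction semigroup on $\bar H$. The key observation is that each operator $\bar{\mathcal{L}}_k$ is in divergence form relative to the weight $l_k(x)$, namely $\bar{\mathcal{L}}_k f = (2 l_k)^{-1}(l_k f')'$, so that the natural symmetric bilinear form associated with $\bar L$ is
\[
\bar{\mathcal{E}}(f,g) = \frac{1}{2}\sum_{k=1}^N \int_{I_k} f'(x,k)\, g'(x,k)\, l_k(x)\, dx.
\]
The gluing condition \eqref{gluing} is precisely what is needed so that integration by parts across each internal vertex produces no boundary contribution, yielding $\langle \bar L f, g\rangle_{\bar H} = -\bar{\mathcal{E}}(f,g)$ for $f \in D(\bar L)$ and $g$ continuous and piecewise $H^1$ on $\Gamma$. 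Thus $\bar L$ is symmetric and non-positive on the dense subspace $D(\bar L) \subset \bar H$; its closure is self-adjoint (Friedrichs extension), and by the spectral theorem it generates a strongly continuous symmetric contraction semigroup on $\bar H$ which must coincide with $\bar S(t)$ on $D(\bar L)$ by uniqueness.

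Next, for $\varphi \in C(\bar G)$ I would decompose
\[
S_\epsilon(t)\varphi(z) - \bar{S}(t)^\vee\varphi(z) = \mathbf{E}_z\bigl[\varphi(Z^\epsilon(t)) - \varphi^\wedge(\Pi(Z^\epsilon(t)))\bigr] + \bigl[\mathbf{E}_z \varphi^\wedge(\Pi(Z^\epsilon(t))) - \bar{\mathbf{E}}_{\Pi(z)}\varphi^\wedge(\bar Z(t))\bigr].
\]
The second bracket vanishes for each fixed $t$ by Theorem 2.1 applied to the bounded continuous evaluation functional on $C([0,T];\Gamma)$, after a small regularization of $\varphi^\wedge$ to absorb possible jumps at the vertices. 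Uniformity in $t \in [\tau,T]$ then follows from an equicontinuity/compactness argument based on the strong continuity of $\bar S(t)$. The first bracket encodes the averaging of the fast vertical motion: on the time scale $\epsilon^2$ the reflecting vertical component of $Z^\epsilon$ equilibrates to the normalized Lebesgue measure on the current cross-section, so that for $t \geq \tau > 0$ the conditional law of $Z^\epsilon(t)$ given $\Pi(Z^\epsilon(t)) = (x,k)$ is approximately uniform on $C_k(x)$; integrating $\varphi$ against this conditional law produces exactly $\varphi^\wedge$, and the bracket vanishes as $\epsilon \to 0$, uniformly on $[\tau, T]$.

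For the extension from $C(\bar G)$ to arbitrary $\varphi \in H$, I would use a density argument. Given $\varphi \in H$ and $\varphi_n \in C(\bar G)$ with $\varphi_n \to \varphi$ in $H$, the contraction property of $S_\epsilon(t)$ on $H$ and of $\bar S(t)$ on $\bar H$, together with the identity $\lVert f^\vee \rVert_H = \lVert f \rVert_{\bar H}$ from Proposition \ref{opprop}, give
\[
\lVert S_\epsilon(t)\varphi - \bar{S}(t)^\vee \varphi \rVert_H \leq 2 \lVert \varphi - \varphi_n \rVert_H + \lVert S_\epsilon(t)\varphi_n - \bar{S}(t)^\vee \varphi_n \rVert_H.
\]
The middle term tends to $0$ by the pointwise convergence from the previous step combined with dominated convergence on $G$, since $\lVert \varphi_n \rVert_\infty$ provides a uniform $L^\infty$ majorant. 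The parallel convergence of $(S_\epsilon(t)\varphi)^\wedge$ to $\bar{S}(t)\varphi^\wedge$ in $\bar H$ then follows immediately from $\lVert u^\wedge \rVert_{\bar H} \leq \lVert u \rVert_H$.

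The hard part will be the averaging estimate in the second stage, namely controlling $\mathbf{E}_z|\varphi(Z^\epsilon(t)) - \varphi^\wedge(\Pi(Z^\epsilon(t)))|$ uniformly for $t \in [\tau,T]$ and (after integration) uniformly in $z \in G$. This requires a quantitative ergodic estimate for the reflecting fast vertical motion of $Z^\epsilon$ in cross-sections that depend on the slowly varying horizontal coordinate, which is essentially the same averaging mechanism that underlies Theorem 2.1, but now applied to observables sensitive to the full position of $Z^\epsilon$ rather than only to its projection onto $\Gamma$; extra care is needed near the vertices, where the cross-section changes topology.
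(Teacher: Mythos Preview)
This theorem is not proved in the present paper: it is stated in the preliminaries section as a citation of \cite[Corollary~5.3 and Theorem~5.4]{CF}, with no proof given here. So there is nothing in this paper to compare your proposal against.

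That said, your outline follows the natural strategy one would expect from \cite{CF}: the Dirichlet-form argument for the contraction property of $\bar S(t)$ on $\bar H$ is standard, the decomposition into an averaging term and a weak-convergence term is exactly how one leverages the Freidlin--Wentzell result (Theorem~2.1 here), and the density step from $C(\bar G)$ to $H$ is routine given the contraction bounds. Your identification of the hard part---the quantitative averaging estimate for $\mathbf{E}_z\lvert \varphi(Z^\epsilon(t)) - \varphi^\wedge(\Pi(Z^\epsilon(t)))\rvert$ uniformly in $t \in [\tau,T]$ and near the vertices---is accurate; this is indeed where the work lies in \cite{CF}, and your sketch does not yet supply the mechanism (e.g., coupling or spectral-gap estimates for the fast vertical motion, combined with hitting-time control near the vertex sets $E_i$). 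If you intend to write this up independently of \cite{CF}, that is the step requiring a genuine argument rather than an appeal to intuition.
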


\subsection{Limiting results for the SPDE on $G$}
We recall that $G\subset \mathbb{R}^2$ is a bounded open set with regular boundary. In what follows, we shall assume the following conditions.
\begin{Hypothesis}
\label{hypB}
    The nonlinearity $b: \mathbb{R} \to \mathbb{R}$ is Lipschitz continuous.
\end{Hypothesis}
In particular, this means that
\begin{equation*}
    B: H \to H, \ \ \ u \in H \mapsto B(u) := b \circ u \in H
\end{equation*}
is well-defined and Lipschitz continuous. Similarly, we have that $B: \bar{H} \to \bar{H}$ is well defined and Lipschitz continuous. Moreover, it is immediate to check that for every $f \in\,\bar{H}$
\begin{equation}
\label{star-30}
B(f^\vee)=	(B(f))^\vee.
\end{equation}

We fix a bounded linear operator $Q$ in $H$ such that
\begin{equation}
\label{onbQ}
    Q e_k = \lambda_k e_k
\end{equation}
for some orthonormal basis $\{ e_k \}_{k \in \mathbb{N}}$ in $H$ and some sequence $\{ \lambda_k \}_{k \in \mathbb{N}}\subset [0,\infty)$,
and we assume that $w^Q(t)$ is a cylindrical Wiener process with covariance $Q^2$. This means that it can be written as 
\begin{equation*}
    w^Q(t) = \sum_{k=1}^\infty \lambda_k e_k\beta_k(t),
\end{equation*}
where $\{\beta_k(t)\}_{k \in \mathbb{N}}$ is a sequence of mutually independent Brownian motions defined on some stochastic basis $(\Omega, \mathcal{F}, \{ \mathcal{F}_t\}, \mathbb{P}, \mathbb{E})$.

In \cite{CF}, it is assumed that the series above converges in $L^2(\Omega;H)$. Namely, the following  stronger condition on the operator $Q$ is assumed
  \begin{equation}
  \label{hyp2}	
\lVert Q \rVert_{\mathcal{L}_2(H)}^2
        = \sum_{k=1}^\infty \abs{Qe_k}_H^2 
        = \sum_{k=1}^\infty \lambda_k^2 < + \infty.
    \end{equation}

Under these assumptions (but in fact, weaker conditions on $w^Q$ can be assumed, see section \ref{secQ} for more detail), for any $\epsilon>0$ equation \eqref{narrow SPDE} admits a unique mild solution $u_\epsilon$ in $L^p(\Omega, C([0,T];H))$ (see e.g. \cite{DPZ}). More precisely, for any $T>0$ and $p \geq 1$, there exists a unique adapted process $u_\epsilon \in L^p(\Omega, C([0,T];H))$  such that
\begin{equation*}
    u_\epsilon(t) = S_\epsilon(t) u_0 + \int_0^t S_\epsilon(t-s)B(u_\epsilon(s)) ds + \int_0^t S_\epsilon(t-s) dw^Q(s).
\end{equation*}  
In addition,  the process
\begin{equation*}
    \bar{w}^Q(t) \coloneqq w^Q(t)^\wedge =  \sum_{k=1}^\infty (Qe_k)^\wedge\beta_k(t)=\sum_{k=1}^\infty \lambda_k\, e_k^\wedge\,\beta_k(t),\ \ \ \ t\geq 0,
\end{equation*}
is well-defined in $L^2(\Omega,\bar{H})$. This, together with the fact that $\bar{S}(t)$ is a contraction semigroup on $\bar{H}$, implies that equation \eqref{graph SPDE} is well-posed in $L^p(\Omega, C([0,T];\bar{H}))$ and the solution $\bar{u}(t)$ satisfies the equation
\begin{equation*}
    \bar{u}(t) = \bar{S}(t)u_0^\wedge+\int_0^t\bar{S}(t-s)B(\bar{u}(s))ds+ \int_0^t \bar{S}(t-s) d\bar{w}^Q(s).
\end{equation*}

As we mentioned in the Introduction, in \cite{CF} it has been proved that the solution to \eqref{narrow SPDE} converges to the solution to \eqref{graph SPDE}. More precisely
\begin{Theorem}\label{prevmain}\cite[Theorem 7.2]{CF}
For every  $u_0 \in\,H$, $p\geq 1$ and $T>0$, and for every $\tau \in\,(0,T)$ we have  \begin{equation}
        \lim_{\epsilon \to 0}\, \mathbb{E} \sup_{t \in [\tau, T]} \abs{u_\epsilon(t)-\bar{u}(t)^\vee}_H^p 
        = \lim_{\epsilon \to 0}\, \mathbb{E} \sup_{t \in [\tau, T]} \abs{u_\epsilon(t)^\wedge-\bar{u}(t)}_{\bar{H}}^p =0.
    \end{equation}
\end{Theorem}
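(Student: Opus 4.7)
The plan is to compare the mild formulations of $u_\epsilon$ and $\bar{u}^\vee$ term by term. Applying $(\cdot)^\vee$ to the mild equation for $\bar{u}$ and using \eqref{star-30}, we get
\begin{equation*}
\bar{u}(t)^\vee = \bar{S}(t)^\vee u_0 + \int_0^t \bar{S}(t-s)^\vee B(\bar{u}(s)^\vee)\,ds + \left(\int_0^t \bar{S}(t-s)\,d\bar{w}^Q(s)\right)^\vee,
\end{equation*}
so that the difference decomposes as $u_\epsilon(t)-\bar{u}(t)^\vee = I^0_\epsilon(t) + I^B_\epsilon(t) + \Psi_\epsilon(t)$, where $I^0_\epsilon(t) := S_\epsilon(t)u_0 - \bar{S}(t)^\vee u_0$ is the initial-data piece, $I^B_\epsilon(t)$ is the drift piece, and $\Psi_\epsilon(t) := \int_0^t S_\epsilon(t-s)\,dw^Q(s) - (\int_0^t \bar{S}(t-s)\,d\bar{w}^Q(s))^\vee$ is the stochastic convolution piece.

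For $I^0_\epsilon$, Theorem \ref{prevconv} delivers $\sup_{t\in[\tau,T]}|I^0_\epsilon(t)|_H \to 0$ directly, and this is exactly the step that forces $\tau>0$. For $I^B_\epsilon$, I would split
\begin{equation*}
I^B_\epsilon(t) = \int_0^t S_\epsilon(t-s)[B(u_\epsilon(s))-B(\bar{u}(s)^\vee)]\,ds + \int_0^t [S_\epsilon(t-s)-\bar{S}(t-s)^\vee] B(\bar{u}(s)^\vee)\,ds.
\end{equation*}
The first summand is bounded by $L\int_0^t |u_\epsilon(s)-\bar{u}(s)^\vee|_H\,ds$ thanks to Lipschitz continuity of $B$ and contractivity of $S_\epsilon(t)$ on $H$, which is ready for Gr\"onwall. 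The second summand is handled by cutting the time integration into $[0,t-\eta]$, where Theorem \ref{prevconv} gives uniform convergence in combination with dominated convergence in $s$, and $[t-\eta,t]$, whose contribution is small uniformly in $\epsilon$ since $\lVert S_\epsilon(t-s)-\bar{S}(t-s)^\vee \rVert_{\mathcal{L}(H)}\leq 2$ and $\eta$ can be made arbitrarily small.

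The main obstacle is $\Psi_\epsilon$. Using the identity $(\bar{S}(t-s)e_k^\wedge)^\vee = \bar{S}(t-s)^\vee e_k$, we may write $\Psi_\epsilon(t) = \sum_k \lambda_k \int_0^t [S_\epsilon(t-s)e_k - \bar{S}(t-s)^\vee e_k]\,d\beta_k(s)$, and It\^o's isometry gives
\begin{equation*}
\mathbb{E}|\Psi_\epsilon(t)|_H^2 = \sum_k \lambda_k^2 \int_0^t |S_\epsilon(t-s)e_k - \bar{S}(t-s)^\vee e_k|_H^2\,ds.
\end{equation*}
For each $k$ and a.e. $s<t$, Theorem \ref{prevconv} yields convergence of the integrand to zero, while contractivity bounds it uniformly by $4$. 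Since $\sum_k \lambda_k^2 < \infty$ by \eqref{hyp2}, two applications of dominated convergence give $\mathbb{E}|\Psi_\epsilon(t)|_H^2 \to 0$ pointwise in $t$. To upgrade this to $\mathbb{E}\sup_{t\in[\tau,T]}|\Psi_\epsilon(t)|_H^p \to 0$, I would invoke the factorization method of Da Prato--Kwapie\'n--Zabczyk, expressing the stochastic convolution as a deterministic convolution of an auxiliary process against a weakly singular kernel and then applying the Burkholder--Davis--Gundy inequality together with Young's convolution inequality to reduce the supremum bound to a moment estimate of the same type.

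Combining the three estimates and applying Gr\"onwall's inequality to $\varphi_\epsilon(t) := \mathbb{E}\sup_{s\in[\tau,t]}|u_\epsilon(s)-\bar{u}(s)^\vee|_H^p$ yields the first limit. The second limit follows immediately from the first: by Proposition \ref{opprop} and the identity $(\bar{u}(t)^\vee)^\wedge = \bar{u}(t)$ from \eqref{star-7}, we have $|u_\epsilon(t)^\wedge - \bar{u}(t)|_{\bar{H}} = |(u_\epsilon(t)-\bar{u}(t)^\vee)^\wedge|_{\bar{H}} \leq |u_\epsilon(t)-\bar{u}(t)^\vee|_H$.
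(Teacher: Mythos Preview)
Your decomposition and overall strategy match the paper's. Note that the paper does not itself prove Theorem~\ref{prevmain}: it is quoted from \cite{CF}, and the paper's own contribution is the generalization in Theorem~\ref{newconv} under the weaker Hypothesis~\ref{finalhypQ}. The structure you outline --- split into initial-data, drift, and stochastic-convolution pieces, handle the first two via Theorem~\ref{prevconv} and Lipschitz continuity, then close with Gr\"onwall --- is exactly the skeleton the paper invokes (see the sentence preceding \eqref{star-2} and the parallel argument in Section~\ref{secpf1}). For the stochastic convolution under \eqref{hyp2}, your dominated-convergence argument based on $\sum_k\lambda_k^2<\infty$ is the natural simplification, and the paper's factorization splitting in Subsection~3.1 (with $Y_{\alpha,1}^\epsilon$ and $Y_{\alpha,2}$) is precisely the mechanism you would need to make the upgrade to the supremum norm rigorous.

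One concrete point to fix: your final Gr\"onwall step is set up in the wrong order. If you apply Gr\"onwall to $\varphi_\epsilon(t)=\mathbb{E}\sup_{s\in[\tau,t]}|u_\epsilon(s)-\bar{u}(s)^\vee|_H^p$, the drift term still contributes $\int_0^\tau|u_\epsilon(r)-\bar{u}(r)^\vee|_H\,dr$, which is bounded uniformly in $\epsilon$ but does \emph{not} vanish as $\epsilon\to 0$ (there is no control on this difference near $t=0$, and $\tau$ is fixed). The correct order, as carried out in \eqref{star-27}--\eqref{Gronwall re}, is to apply Gr\"onwall pointwise on $[0,T]$ first, obtaining $|u_\epsilon(t)-\bar{u}(t)^\vee|_H^p\le c_T\bigl(R_\epsilon(t)+\int_0^t R_\epsilon(s)\,ds\bigr)$, and only then take $\mathbb{E}\sup_{t\in[\tau,T]}$. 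The remainder $R_\epsilon$ then satisfies $\mathbb{E}\sup_{[\tau,T]}R_\epsilon\to 0$ (this is where $\tau>0$ enters for the initial-data piece) and $\int_0^T\mathbb{E}R_\epsilon(s)\,ds\to 0$ by dominated convergence, since $R_\epsilon(s)\to 0$ for each fixed $s>0$ and is dominated uniformly.
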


\section{Convergence of solutions in case of rough noise}
\label{secQ}

In this section we show that Theorem \ref{prevmain} holds under a  condition on the regularity of the noise that is considerably weaker than \eqref{hyp2}. 
We recall that since $Q \in\,\mathcal{L}^+(H)$ there exists a complete orthonormal system $\{e_k\}_{k \in\,\mathbb{N}}$ in $H$ and a sequence $\{\lambda_k\}_{k \in\,\mathbb{N}} \subset [0,+\infty)$ such that
$Qe_k =\lambda_k e_k$.
In what follows, we shall assume that the eigenvalues $\lambda_k$ satisfy the following condition.
\begin{Hypothesis}\label{finalhypQ}
   There exists $r<\infty$ such that 
\begin{equation*}
      \kappa_{Q, r}:=   \sum_{k=1}^\infty \lambda_k^r  < +\infty.
\end{equation*}
\end{Hypothesis}
Note that, since we are not  assuming that $r=2$, the cylindrical Wiener process $w^Q(t)$ is not well-defined in $H$. In fact, $w^Q(t)$ lives in some larger space Hilbert $U \supset H$ such that the embedding $U_0 \coloneqq Q(H) \xhookrightarrow{} U$ is Hilbert-Schmidt.
Similarly,
\begin{equation*}
    \bar{w}^Q(t) =  \sum_{k=1}^\infty \lambda_k\, e_k^\wedge\,\beta_k(t)
\end{equation*}
is not well-defined in $\bar{H}$ but lives in some larger space $\bar{U} \supset \bar{H}$ with the Hilbert-Schmidt embedding $\bar{U}_0 \coloneqq Q^\wedge (\bar{H}) \xhookrightarrow{} \bar{U}$ (see e.g. \cite{DPZ} for more detail).
Moreover, Hypothesis \ref{finalhypQ} is the necessary  assumption  to ensure the well-posedness of the classical stochastic heat equation in  dimension $d=2$. 

\begin{Theorem}\label{newconv}
    Under Hypotheses \ref{hypB} and \ref{finalhypQ}, for any $u_0 \in H$, $p \geq 1$, and $0 < \tau <T$, it holds

\begin{equation}\label{3.1}
        \lim_{\epsilon \to 0}\, \mathbb{E} \sup_{t \in [\tau, T]} \abs{u_\epsilon(t)-\bar{u}(t)^\vee}_H^p 
        = \lim_{\epsilon \to 0}\, \mathbb{E} \sup_{t \in [\tau, T]} \abs{u_\epsilon(t)^\wedge-\bar{u}(t)}_{\bar{H}}^p =0.
\end{equation}

\end{Theorem}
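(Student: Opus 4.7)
The plan is to extend Theorem \ref{prevmain} by exploiting kernel estimates on the semigroups $S_\epsilon(t)$ and $\bar{S}(t)$ that compensate for the roughness of the noise. Writing out the Duhamel-type representation, the difference $u_\epsilon^\wedge - \bar{u}$ splits into three pieces: the linear propagation of the datum, the nonlinear drift term, and, most delicately, the difference of the stochastic convolutions. For the first, Theorem \ref{prevconv} already gives convergence; for the second, the Lipschitz bound on $B$ together with the identity \eqref{star-30} and a Gronwall argument will reduce everything to control of the first and third. Thus the core of the proof is to
\begin{enumerate}
\item[(i)] establish well-posedness of the equations in $H$ and $\bar{H}$ under Hypothesis \ref{finalhypQ}, uniformly in $\epsilon$, and
\item[(ii)] prove the convergence of the stochastic convolutions.
\end{enumerate}

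For step (i), I would first prove uniform kernel estimates for $S_\epsilon(t)$. Using the rescaling to the narrow channel $G_\epsilon$, the kernel $p_\epsilon(t,z,z')$ is related to the Neumann heat kernel $\tilde p$ on $G_\epsilon\subset\mathbb{R}^2$ by $p_\epsilon(t,(x,y),(x',y')) = \epsilon\,\tilde p(t,(x,\epsilon y),(x',\epsilon y'))$. Standard Gaussian upper bounds for $\tilde p$, combined with the large-time contribution $1/|G_\epsilon|=1/(\epsilon|G|)$, yield $0\le p_\epsilon(t,z,z')\le C(1+t^{-1})$ uniformly in $\epsilon\in(0,1)$. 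Integrating along the diagonal gives a uniform Hilbert-Schmidt bound $\|S_\epsilon(t)\|_{\mathcal{L}_2(H)}^2\le C(1+t^{-1})$. Combined with the contractivity $\|S_\epsilon(t)\|_{\mathcal{L}(H)}\le 1$ and Schatten interpolation, I then bound $\|S_\epsilon(s)Q\|_{\mathcal{L}_2(H)}^2 \le C\,\kappa_{Q,r}^{2/r}(1+s^{-(r-2)/r})$ via the Schatten-Hölder inequality with $1/p+1/r=1/2$ (the case $r\le 2$ is covered by the previous theory since $\sum\lambda_k^r<\infty$ implies $\sum\lambda_k^2<\infty$). This is integrable on $(0,T]$ uniformly in $\epsilon$, so $W_\epsilon(t)=\int_0^t S_\epsilon(t-s)\,dw^Q(s)$ is well-defined in $L^2(\Omega;H)$ uniformly. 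The analogous kernel estimate for $\bar{p}$ is then obtained by passing to the limit in $p_\epsilon$ via Theorem \ref{prevconv} (integrated against $\nu$-test functions), and together with the 1-dimensional structure of $\Gamma$ it yields $\|\bar S(t)Q^\wedge\|_{\mathcal{L}_2(\bar H)}^2 \le C(1+t^{-(r-2)/r})$ and well-posedness of \eqref{graph SPDE}.

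For step (ii), the key quantity is
\[
\mathbb{E}\,|W_\epsilon(t)^\wedge-\bar W(t)|_{\bar H}^2
= \sum_{k=1}^\infty \lambda_k^2 \int_0^t \bigl|(S_\epsilon(t-s)e_k)^\wedge-\bar S(t-s)e_k^\wedge\bigr|_{\bar H}^2\,ds.
\]
I would truncate the series at level $N$. For the finite sum $k\le N$, Theorem \ref{prevconv} gives pointwise-in-$s$ convergence to zero, and the contraction bound $|(S_\epsilon(s)e_k)^\wedge|_{\bar H}+|\bar S(s)e_k^\wedge|_{\bar H}\le 2$ together with dominated convergence handles the $\epsilon\to 0$ limit. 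For the tail $k>N$, I apply the uniform estimates from step (i), with $Q$ replaced by $Q\circ(I-P_N)$, whose Schatten norm is $(\sum_{k>N}\lambda_k^r)^{1/r}\to 0$. Sending first $\epsilon\to 0$ and then $N\to\infty$ concludes the convergence of the stochastic convolutions in $L^2(\Omega;\bar H)$ pointwise in $t$; the upgrade to $L^p(\Omega;C([\tau,T];\bar H))$ and its $\vee$-transplant follows by the factorization method of Da Prato--Kwapien--Zabczyk applied to the uniform kernel estimates.

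The main obstacle is step (i): the kernel bound for $p_\epsilon$ must be uniform in the degenerating parameter $\epsilon$, so the heat-kernel analysis has to be done via the narrow-channel rescaling rather than abstract spectral methods, and its transfer to $\bar p$ requires care near the vertices of $\Gamma$, where the gluing conditions \eqref{gluing} for $\bar L$ enter. Once these kernel/Hilbert-Schmidt estimates are in place (and shown to be stable under $\wedge$ through Proposition \ref{opprop}), the remainder of the argument follows the scheme of \cite{CF}, with the truncation trick above replacing the $r=2$ computation used there.
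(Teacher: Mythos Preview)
Your proposal is correct and matches the paper's approach: uniform heat-kernel bounds give $\|S_\epsilon(s)Q\|_{\mathcal{L}_2(H)}^2\le c\,s^{-(r-2)/r}$ via H\"older, this bound transfers to $\bar S(t)^\vee$ simply by the weak limit of the kernels (so no vertex analysis is actually needed, contrary to your caution), and truncation plus factorization handle the stochastic convolution. The only organizational difference is that the paper applies factorization directly to the \emph{difference} $w_\epsilon-w_{\bar L}^\vee$, producing two auxiliary processes $Y_{\alpha,1}^\epsilon$ (carrying $S_\epsilon-\bar S^\vee$ inside the stochastic integral, handled by your truncation argument) and $Y_{\alpha,2}$ (carrying $S_\epsilon-\bar S^\vee$ in the outer deterministic convolution, handled by a $\tau$-splitting of $\int_0^t$); your ``pointwise convergence then upgrade via factorization'' phrasing is a little loose, since uniform factorization bounds alone do not yield convergence in $C([0,T];H)$, and in practice you would end up factorizing the difference exactly as the paper does.
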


We have
\[
\begin{aligned}
u_\epsilon(t)&-\bar{u}(t)^\vee=S_\epsilon(t)u_0-\bar{S}(t)^\vee u_0\\[10pt]
&+	\int_0^t 	\left(S_\epsilon(t-s)B(u_\epsilon(s))-\bar{S}(t)^\vee B(\bar{u}(s))\right)\,ds+w_\epsilon(t)-w_{\bar{L}}(t)^\vee,
\end{aligned}
\]
where
\begin{equation}  \label{star-30}w_\epsilon(t):=\int_0^t S_\epsilon(t-s)\,dw^Q(s),\ \ \ \ \ \ \ w_{\bar{L}}(t):=\int_0^t\bar{S}(t-s)\,d\bar{w}^Q(s).\end{equation}
Thanks to Theorem \ref{prevconv}, we can proceed as in  \cite[Proof of Theorem 7.2]{CF} and we have that \eqref{3.1} follows once we show that
\begin{equation}
\label{star-2}
\lim_{\epsilon\to 0}\mathbb{E}\,\sup_{t \in\,[0,T]}\vert w_\epsilon(t)-w_{\bar{L}}(t)^\vee\vert_H^p=0.	
\end{equation}

\subsection{Proof of \eqref{star-2}}
In order to prove \eqref{star-2}, we will need some preliminary results.

\begin{Lemma}\label{Qest1}
    Under Hypothesis \ref{finalhypQ}, for all $T>0$ we have 
        \begin{equation}\label{star-5}
        \sup_{\e \in\,(0,1)}\Vert S_\e(s)\Vert^2_{\mathcal{L}_2(U_0,H)}=
        \sup_{\e \in\,(0,1)}\,\sum_{k=1}^\infty \abs{S_\epsilon(s)Qe_k}_H^2 \leq c_T \,\kappa_{Q, r}^{2/r}  \,s^{-\frac{r-2}{r}},\ \ \ \ s \in\,(0,T].
    \end{equation}
    Moreover, for every $\eta>0$ there exists $N_\eta \in\,\mathbb{N}$ such that
    \begin{equation}
    \label{star-3}
  \sup_{\e \in\,(0,1)}\,  \sum_{k\geq N_\eta}	\abs{S_\epsilon(s)Qe_k}_H^2\leq \eta\,s^{-\frac{r-2}{r}},\ \ \ \ s \in\,(0,T].
    \end{equation}

\end{Lemma}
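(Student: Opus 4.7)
The plan is to combine a Hölder inequality at the level of the series with a uniform trace-class estimate for $S_\epsilon(s)$. Since $Qe_k=\lambda_k e_k$, we have $|S_\epsilon(s)Qe_k|_H^2=\lambda_k^2 a_k^2$ with $a_k:=|S_\epsilon(s)e_k|_H$. The case $r\leq 2$ will be immediate, since $\lambda_k\to 0$ forces $\lambda_k^2\leq \lambda_k^r$ for all but finitely many $k$, so $\sum_k\lambda_k^2<\infty$, and contractivity gives $a_k\leq 1$. For $r>2$, Hölder with conjugate exponents $r/2$ and $r/(r-2)$ will yield
\begin{equation*}
\sum_k\lambda_k^2 a_k^2 \leq \Bigl(\sum_k\lambda_k^r\Bigr)^{2/r}\Bigl(\sum_k a_k^{2r/(r-2)}\Bigr)^{(r-2)/r}.
\end{equation*}
Since $a_k\leq 1$ and $2r/(r-2)\geq 2$, I can dominate the inner sum by $\sum_k a_k^2=\|S_\epsilon(s)\|_{\mathcal{L}_2(H)}^2=\Tr(S_\epsilon(s)^*S_\epsilon(s))=\Tr(S_\epsilon(2s))$, using self-adjointness of $S_\epsilon(s)$.

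The key analytic step will be to prove the $\epsilon$-uniform trace bound $\Tr(S_\epsilon(2s))\leq c_T/s$ for $s\in(0,T]$. By the change of variables $(x,y)\mapsto(x,\epsilon y)$, $S_\epsilon(s)$ is unitarily equivalent (up to rescaling) to the Neumann heat semigroup $\tilde S(s)$ on the narrow channel $G_\epsilon$, so $\Tr(S_\epsilon(2s))=\sum_n e^{-2s\tilde\mu_n^\epsilon}$, where $\tilde\mu_n^\epsilon$ are the Neumann eigenvalues of $-\frac12\Delta$ on $G_\epsilon$. Using a Weyl-type upper bound $N^\epsilon(\lambda)\leq c\bigl(1+|G_\epsilon|\lambda\bigr)$ with $c$ independent of $\epsilon$, and the identity
\begin{equation*}
\sum_n e^{-2s\tilde\mu_n^\epsilon}=2s\int_0^\infty e^{-2s\lambda}N^\epsilon(\lambda)\,d\lambda,
\end{equation*}
one obtains the trace bound with a constant proportional to $|G|$. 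Inserting this into the Hölder estimate will give $\sum_k|S_\epsilon(s)Qe_k|_H^2\leq c_T\kappa_{Q,r}^{2/r} s^{-(r-2)/r}$, which is \eqref{star-5}.

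The tail bound \eqref{star-3} will follow from the same Hölder argument applied to the truncated sum over $k\geq N$:
\begin{equation*}
\sum_{k\geq N}\lambda_k^2 a_k^2 \leq \Bigl(\sum_{k\geq N}\lambda_k^r\Bigr)^{2/r}\bigl(c_T/s\bigr)^{(r-2)/r}.
\end{equation*}
Since $\kappa_{Q,r}<\infty$ by Hypothesis \ref{finalhypQ}, given $\eta>0$ I can pick $N_\eta$ so that $\bigl(\sum_{k\geq N_\eta}\lambda_k^r\bigr)^{2/r}c_T^{(r-2)/r}\leq \eta$, which yields the claim. The main obstacle lies in the $\epsilon$-uniform trace (equivalently, heat-kernel) estimate: a naive appeal to Gaussian upper bounds on $G_\epsilon$ would produce constants that degenerate as the channel shrinks, so one must either work directly on $G$ with the anisotropic kernel representation already used in \cite{CF}, or verify that the relevant Weyl-type constants depend only on geometric quantities preserved under the scaling $(x,y)\mapsto(x,\epsilon y)$.
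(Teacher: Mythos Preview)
Your overall strategy is exactly the one the paper uses: H\"older with exponents $r/2$ and $r/(r-2)$ on $\sum_k\lambda_k^2|S_\epsilon(s)e_k|_H^2$, followed by the domination $\sum_k|S_\epsilon(s)e_k|_H^{2r/(r-2)}\leq \sum_k|S_\epsilon(s)e_k|_H^2$ via contractivity, and then a uniform-in-$\epsilon$ bound on that last sum of order $c_T/s$; the tail estimate \eqref{star-3} is obtained identically, from $(\sum_{k\geq N}\lambda_k^r)^{2/r}\to 0$.

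The only divergence is in how you justify $\sum_k|S_\epsilon(s)e_k|_H^2\leq c_T/s$. The paper does not go through Weyl counting on $G_\epsilon$: it invokes directly the pointwise heat-kernel bound $K_\epsilon(t,z,z')\leq c_T/t$ for the anisotropic operator $\mathcal L_\epsilon$ on the \emph{fixed} domain $G$ (citing \cite[Theorem 3.2.9]{HKST}) and then writes
\[
\sum_k|S_\epsilon(s)e_k|_H^2=\int_G|K_\epsilon(s,z,\cdot)|_H^2\,dz\leq \frac{c_T}{s}\int_G\!\int_G K_\epsilon(s,z,z')\,dz'\,dz=\frac{c_T\,|G|}{s},
\]
using that $K_\epsilon(s,z,\cdot)$ is a probability density. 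This is precisely the ``anisotropic kernel representation'' you list at the end as a fallback, and it settles the uniformity in $\epsilon$ in one line. Your primary route via the Neumann counting function $N^\epsilon(\lambda)\leq c(1+|G_\epsilon|\lambda)$ on the shrinking channels would yield the same conclusion, but establishing that bound with an $\epsilon$-independent constant is genuinely delicate for Neumann boundary conditions (no domain monotonicity), so in practice you would end up proving something equivalent to the kernel bound anyway.
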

    \begin{proof}
   By H\"older inequality we have
        \begin{equation}\label{star-4}
            \sum_{k=1}^\infty \abs{S_\epsilon(s)Qe_k}_H^2 
            =\sum_{k=1}^\infty \lambda_k^2 \abs{S_\epsilon(s)e_k}_H^2 
            \leq \kappa_{Q, r}^{2/r}
              \left(\, \sum_{k=1}^\infty \abs{S_\epsilon(s)e_k}_H^{\frac{2r}{r-2}} \right)^{\frac{r-2}{r}} 
            =  \kappa_{Q, r}^{2/r}\,J_\epsilon(s)^{\frac{r-2}{r}},
        \end{equation}
        where
        \begin{equation*}
            J_\epsilon(s) \coloneqq \sum_{k=1}^\infty \abs{S_\epsilon(s)e_k}_H^{\frac{2r}{r-2}}
            = \sum_{k=1}^\infty \abs{S_\epsilon(s)e_k}_H^2 \abs{S_\epsilon(s)e_k}_H^{\frac{4}{r-2}}
            \leq \sum_{k=1}^\infty \abs{S_\epsilon(s)e_k}_H^2
        \end{equation*}
        (last inequality is due to the fact that $S_\epsilon(t)$ is a contraction semigroup on $H$ and $\{ e_k \}$ is an orthonormal basis for $H$). 

        Now, for any $z=(x,y), z'=(x',y') \in G$, we denote by $K_\epsilon(s,z,z')$ the  heat kernel of the semigroup $S_\epsilon(s)$. Namely
        \[S_\epsilon(s)u_0(z)=\int_G K_\epsilon(s,z,z') u_0(z')dz'=\langle K_\epsilon(s,z,\cdot),u_0\rangle_H,\ \ \ \ \ \ \ u_0 \in H,\ \ \ z \in\,G.\]
     According to   \cite[Theorem 3.2.9]{HKST},    the following  heat kernel estimate holds
        \begin{equation}\label{HKE for G}
            K_\epsilon(t,z,z')\leq \frac{c_T}{t}\exp{\left(-\frac{\epsilon^2 \left(\abs{x-x'}^2+\abs{y-y'}^2 \right)}{c\,t}\right)} \leq \frac{c_T}{t},\ \ \ \ \ t \in\,[0,T]
        \end{equation}
        for some $c, c_T>0$.
  This allows to conclude that
        \begin{equation*}
            J_\epsilon(s) \leq \sum_{k=1}^\infty \abs{S_\epsilon(s)e_k}_H^2 
            = \sum_{k=1}^\infty \int_G \vert\langle K_\epsilon(s,z,\cdot),e_k\rangle_H\vert^2\,dz=\int_G\vert K_\epsilon(s,z,\cdot)\vert_H^2\,dz\leq \frac{c_{T,G}}{s}.
                    \end{equation*}
  In particular, due to \eqref{star-4}, this implies \eqref{star-5}.
  
  Now, for every $N \in\,\mathbb{N}$, we have
  \[\begin{aligned}
  	\sum_{k\geq N}& \abs{S_\epsilon(s)Qe_k}_H^2 
            =\sum_{k\geq N} \lambda_k^2 \abs{S_\epsilon(s)e_k}_H^2\\[8pt]
            & 
            \leq \left(\,\sum_{k\geq N}\lambda_k^r\right)^{2/r}
              \left(\, \sum_{k=1}^\infty \abs{S_\epsilon(s)e_k}_H^{\frac{2r}{r-2}} \right)^{\frac{r-2}{r}}=\left(\,\sum_{k\geq N}\lambda_k^r\right)^{2/r}\,J_\epsilon(s)^{\frac{r-2}r},
  \end{aligned}\]
              so that, for every $\epsilon \in\,(0,1)$, we get
\[\sum_{k\geq N} \abs{S_\epsilon(s)Qe_k}_H^2\leq c_{T}\,\left(\,\sum_{k\geq N}\lambda_k^r\right)^{2/r} s^{-\frac{r-2}{2}},\ \ \ \ \ s \in\,(0,T],\]
As a consequence of Hypothesis \ref{finalhypQ}, this implies that for every $\eta>0$ we can find $N_\eta \in\,\mathbb{N}$ such that \eqref{star-3} holds.
  
    \end{proof}
    
    Next, we want to prove an analogous of Lemma \ref{Qest1} for $\bar{S}(t)^\vee$.

\begin{Remark}{\em 
	In order to prove  a result analogous to Lemma \ref{Qest1} for $\bar{S}(t)^\vee$, one should first show that $\bar{S}(t)^\vee$ admits a kernel and an  estimate analogous to \eqref{HKE for G} holds for $\bar{S}(t)^\vee$. There are two possible approaches to obtain this kind of results.	\begin{enumerate}
		\item On the one hand, Theorem  \ref{prevconv} implies that $K_\epsilon(t,z,\cdot)$ converges to some $\hat{K}(t,z,\cdot)$ weakly, as $\e\downarrow 0$, for every $t>0$ and $z \in\,G$, and the limit $\hat{K}(s,z,z')$ is a kernel for $\bar{S}(t)^\vee$. Namely 
		\begin{equation*}
			\bar{S}^\vee(t) u_0(z) = \int_G \hat{K}(t,z,z') u_0(z') dz'=\langle \bar{K}(t,z,\cdot),u_0\rangle_H,\ \ \ \ \ \ u_0 \in\,H.
		\end{equation*}
		As a result, the uniform heat kernel estimate \eqref{HKE for G} remains valid for $\hat{K}(t,z,z')$, that is
		\begin{equation}\label{HKE for graph weak}
			\hat{K}(t,z,z') \leq \frac{c_T}{t},\ \ \ \ \ t \in\,(0,T].
		\end{equation}
		\item On the other hand, one could work directly with  the semigroup $\bar{S}(t)$ on the graph $\Gamma$ and show that it admits a kernel $\bar{K}(t,x,x')$. In this regard, $\bar{K}$ would be a one-dimensional heat kernel and  one can expect that the following bound 
		\begin{equation}\label{HKE for graph strong}
			\bar{K}(s,x,x') \leq \frac{c}{s^{1/2}}\exp\left(-\frac{d(x,x')^2}{c\,s}\right)
		\end{equation}
		should be satisfied. 
		Such a result can be achieved by checking the volume doubling property and the Poincar\'e inequality, which would allow to get the two-sided heat kernel estimate thanks to  the result in \cite{LSC book}.
		Alternatively, one could utilize the result in \cite{Fan2017} by recognizing that the Dirichlet form 
\[\mathcal{E}(f,g) \coloneqq \langle -\bar{L}f , g \rangle_{\bar{H}}\] fits into their setting.
		Another approach could be to use the analysis of the transition probability in \cite{FS2000} to achieve the desired order $s^{-\frac{1}{2}}$.
	\end{enumerate}
	However, for the purpose of this paper  bound \eqref{HKE for graph weak} is sufficient and we will not address bound \eqref{HKE for graph strong}. }
\end{Remark}

\begin{Lemma}\label{Qest2}
    If Hypothesis \ref{finalhypQ} holds for some $r \in (2,\infty)$, then there exists $c_T>0$ such that
    \begin{equation}
\label{star-8}        \Vert \bar{S}^\vee (s)\Vert^2_{\mathcal{L}_2(U_0,H)}=\sum_{k=1}^\infty \abs{\bar{S}^\vee (s)Q e_k}_H^2 \leq c_T\,   \kappa_{Q, r}^{2/r}\, s^{-\frac{r-2}{r}},\ \ \ \ \ \ \ s \in (0,T].
    \end{equation}
    Moreover, for every $\eta>0$ there exists $N_\eta \in\,\mathbb{N}$ such that
    \begin{equation}
    \label{star-3-bis}
 \sum_{k\geq N_\eta}	\abs{\bar{S}(s)^\vee Qe_k}_H^2\leq \eta\,s^{-\frac{r-2}{r}},\ \ \ \ s \in\,(0,T].
    \end{equation}
    \end{Lemma}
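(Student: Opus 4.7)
The plan is to mirror the proof of Lemma \ref{Qest1}, replacing the heat kernel $K_\epsilon$ by the kernel $\hat K(t,z,z')$ of $\bar S(t)^\vee$ described in the Remark above. Two preliminary facts are required. First, $\bar S(t)^\vee$ is a contraction semigroup on $H$: this follows from Theorem \ref{prevconv} and the estimate $\lVert A^\vee\rVert_{\mathcal{L}(H)}\leq \lVert A\rVert_{\mathcal{L}(\bar H)}$ in Proposition \ref{opprop}, so in particular $\abs{\bar S(s)^\vee e_k}_H\leq 1$ for every $k$. Second, by the Remark the kernel $\hat K(t,z,\cdot)$ exists as the weak limit in $H$ of $K_\epsilon(t,z,\cdot)$, and the pointwise bound
\begin{equation*}
\hat K(s,z,z')\leq \frac{c_T}{s},\qquad s\in (0,T],\ z,z'\in G,
\end{equation*}
is inherited from the uniform estimate \eqref{HKE for G} upon passing to the weak limit against nonnegative test functions.

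With these two ingredients I would argue exactly as in Lemma \ref{Qest1}. By H\"older's inequality with conjugate exponents $r/2$ and $r/(r-2)$, together with the contractivity of $\bar S(t)^\vee$ on $H$,
\begin{equation*}
\sum_{k=1}^\infty \lambda_k^2\,\abs{\bar S(s)^\vee e_k}_H^2 \leq \kappa_{Q,r}^{2/r}\left(\sum_{k=1}^\infty \abs{\bar S(s)^\vee e_k}_H^{\frac{2r}{r-2}}\right)^{\!\frac{r-2}{r}} \leq \kappa_{Q,r}^{2/r}\left(\sum_{k=1}^\infty \abs{\bar S(s)^\vee e_k}_H^{2}\right)^{\!\frac{r-2}{r}}.
\end{equation*}
Parseval gives $\sum_k \abs{\bar S(s)^\vee e_k}_H^2=\int_G \abs{\hat K(s,z,\cdot)}_H^2\,dz$, so the task reduces to bounding this integral. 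Using $\hat K(s,z,z')\leq c_T/s$ pointwise together with $\int_G \hat K(s,z,z')\,dz'\leq 1$ (which follows from the weak-$L^2$ limit of the probability kernel identity $\int_G K_\epsilon(s,z,z')\,dz'=1$, tested against $\mathbf{1}\in L^2(G)$), we obtain
\begin{equation*}
\int_G\abs{\hat K(s,z,\cdot)}_H^2\,dz \leq \frac{c_T}{s}\int_G\!\int_G \hat K(s,z,z')\,dz'\,dz \leq \frac{c_T\,\abs{G}}{s},
\end{equation*}
which plugged into the H\"older estimate yields \eqref{star-8}.

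The tail estimate \eqref{star-3-bis} follows from the same computation restricted to $k\geq N$: applying H\"older only on the tail gives
\begin{equation*}
\sum_{k\geq N}\lambda_k^2\,\abs{\bar S(s)^\vee e_k}_H^2 \leq \Bigl(\,\sum_{k\geq N}\lambda_k^r\Bigr)^{\!2/r}\Bigl(\frac{c_T\,\abs{G}}{s}\Bigr)^{\!(r-2)/r},
\end{equation*}
and Hypothesis \ref{finalhypQ} lets us choose $N_\eta$ making the first factor arbitrarily small. The only genuinely non-mechanical step in this scheme is the extraction of the kernel $\hat K$ and the transfer of the bound $c_T/s$ from $K_\epsilon$ to $\hat K$; once this has been secured through the weak convergence indicated in the Remark, the rest of the argument is a verbatim translation of the proof of Lemma \ref{Qest1}.
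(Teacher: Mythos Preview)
Your proposal is correct and follows essentially the same approach as the paper: establish that $\bar S(t)^\vee$ is a contraction on $H$, invoke the kernel bound \eqref{HKE for graph weak} from the Remark, and then rerun the H\"older/Parseval/kernel argument of Lemma \ref{Qest1}. The paper's proof is more terse---it simply checks contractivity (and, additionally, the semigroup property of $\bar S(t)^\vee$, which is not actually needed for the bounds here) and then declares the rest identical to Lemma \ref{Qest1}---whereas you spell out the computation, including the step $\int_G \hat K(s,z,z')\,dz'\leq 1$ that is implicit in the paper's $c_{T,G}/s$ bound.
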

\begin{proof}
 The family $\bar{S}(t)^\vee$, $t\geq 0$, defines a semigroup of bounded linear operators in $H$. Actually, due to \eqref{star-7}, for every $t, s\geq 0$ and $g \in\,H$ we have
    \[\begin{aligned}
\bar{S}(t+s)^\vee g&=\left(\bar{S}(t+s) g^\wedge\right)^\vee=\left(\bar{S}(t)\bar{S}(s) g^\wedge\right)^\vee=\left(\bar{S}(t)((\bar{S}(s) g^\wedge)^\vee)^\wedge\right)^\vee\\[10pt]
&=\left(\bar{S}(t)(\bar{S}(s)^\vee g)^\wedge\right)^\vee=\bar{S}(t)^\vee (\bar{S}(s)^\vee g).\end{aligned}
\]
Moreover, since $\bar{S}(t)$ 	is a contraction, due to \eqref{star-6} we have that $\bar{S}(t)^\vee$ is also a contraction.  

Therefore, since $\bar{S}(t)^\vee$ is a semigroup of contractions in $H$ and \eqref{HKE for graph weak} holds, the proof is the same as the one in Lemma \ref{Qest1}.
\end{proof}

As a consequence of Lemma \ref{Qest1} and Lemma \ref{Qest2},  we have the following bounds.

\begin{Lemma}\label{hypAbstractQ}
    Under Hypothesis \ref{finalhypQ}, there exists some $\alpha \in (0,\frac{1}{2})$ such that
    \begin{equation}
        \sup_{\e \in\,(0,1)}\,\int_0^T s^{-2\alpha} \lVert S_\epsilon(s) \rVert_{\mathcal{L}_2(U_0;H)}^2 ds < +\infty, \label{bddeps} 
    \end{equation}
    and
    \begin{equation}
        \int_0^T s^{-2\alpha} \lVert \bar{S}^\vee(s) \rVert_{\mathcal{L}_2(U_0;H)}^2 ds < +\infty. \label{bddbar} 
    \end{equation}
    Moreover
    \begin{equation}
    \label{star-20}
    \lim_{\e\to 0}\int_0^T\lVert S_\epsilon(s)-\bar{S}^\vee(s) \rVert_{\mathcal{L}_2(U_0;H)}^2 ds=0.	
    \end{equation}

\end{Lemma}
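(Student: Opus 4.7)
The strategy splits into the three claims. For the finiteness bounds, I would select $\alpha \in (0, 1/r)$, which indeed lies in $(0, 1/2)$ since $r>2$. Then by \eqref{star-5} and \eqref{star-8},
\begin{equation*}
\int_0^T s^{-2\alpha} \lVert S_\epsilon(s) \rVert_{\mathcal{L}_2(U_0;H)}^2\, ds \leq c_T \kappa_{Q,r}^{2/r} \int_0^T s^{-2\alpha - (r-2)/r}\, ds,
\end{equation*}
and the integral on the right is finite provided $2\alpha + (r-2)/r < 1$, i.e. $\alpha < 1/r$. The bound is uniform in $\epsilon$, which gives \eqref{bddeps}; the identical argument with Lemma \ref{Qest2} gives \eqref{bddbar}.

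For the convergence statement \eqref{star-20}, the plan is an $\eta$-argument combining three ingredients: (i) the uniform-in-$\epsilon$ kernel bounds from Lemmas \ref{Qest1}--\ref{Qest2} to handle the integrand near $s=0$, (ii) the uniform tail estimates \eqref{star-3} and \eqref{star-3-bis} to truncate the series at some index $N$, and (iii) the pointwise convergence $S_\epsilon(t)\varphi \to \bar{S}(t)^\vee \varphi$ in $H$, uniformly on $[\tau,T]$, granted by Theorem \ref{prevconv}, to dispatch the finitely many remaining terms. Concretely, fix $\eta'>0$. Since $(r-2)/r < 1$, one first chooses $\tau>0$ so small that
\begin{equation*}
\int_0^\tau \lVert S_\epsilon(s) - \bar{S}(s)^\vee \rVert_{\mathcal{L}_2(U_0;H)}^2\, ds \leq 4 c_T \kappa_{Q,r}^{2/r} \int_0^\tau s^{-(r-2)/r}\, ds < \eta'/3,
\end{equation*}
uniformly in $\epsilon \in (0,1)$, by triangle inequality and the two kernel bounds.

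Next, for $\eta = \eta'/(6 r T^{2/r})$, apply \eqref{star-3} and \eqref{star-3-bis} to pick $N_\eta$ so that the sum of the tails $\sum_{k\geq N_\eta} |S_\epsilon(s)Qe_k|_H^2$ and $\sum_{k\geq N_\eta} |\bar{S}(s)^\vee Qe_k|_H^2$ are each bounded by $\eta\, s^{-(r-2)/r}$, uniformly in $\epsilon$. Integrating over $[\tau,T]$ and using $(a+b)^2 \leq 2a^2+2b^2$ entrywise in the Hilbert--Schmidt norm,
\begin{equation*}
\int_\tau^T \sum_{k \geq N_\eta} |S_\epsilon(s)Qe_k - \bar{S}(s)^\vee Qe_k|_H^2\, ds \leq 4\eta \int_0^T s^{-(r-2)/r}\, ds < \eta'/3.
\end{equation*}
Finally, for the remaining finite sum over $k < N_\eta$, each term $|S_\epsilon(s)Qe_k - \bar{S}(s)^\vee Qe_k|_H^2 = \lambda_k^2 |S_\epsilon(s)e_k - \bar{S}(s)^\vee e_k|_H^2$ is uniformly bounded on $[\tau,T]$ (by $4\lambda_k^2$, since the semigroups are contractions) and tends to zero uniformly on $[\tau,T]$ by Theorem \ref{prevconv}. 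Dominated convergence over $[\tau,T]$ applied to this finite sum yields, for $\epsilon$ sufficiently small, a contribution bounded by $\eta'/3$, and adding the three estimates proves \eqref{star-20}.

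The only subtle step is the truncation near $s=0$: without the sharpened bounds \eqref{star-3}--\eqref{star-3-bis}, one would only get a crude $\kappa_{Q,r}^{2/r} s^{-(r-2)/r}$ tail that does not shrink with $N$, and the $[\tau,T]$-piece of the argument would collapse. The availability of a uniform-in-$\epsilon$ tail estimate for $\bar{S}(t)^\vee$, which in turn relies on the weak kernel bound \eqref{HKE for graph weak} derived from Theorem \ref{prevconv}, is precisely what makes the argument go through.
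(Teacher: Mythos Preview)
Your proof is correct and follows essentially the same route as the paper: pick $\alpha\in(0,1/r)$ for \eqref{bddeps}--\eqref{bddbar}, and for \eqref{star-20} split the Hilbert--Schmidt sum at a large index $N$, control the tail via the kernel bounds, and dispatch the finitely many low modes by Theorem~\ref{prevconv} together with dominated convergence. The only cosmetic difference is that you isolate the small-time region $[0,\tau]$ as a separate step, whereas the paper treats the full interval $[0,T]$ for the finite sum directly by dominated convergence (the integrand $\lambda_k^2|S_\epsilon(s)e_k-\bar{S}(s)^\vee e_k|_H^2$ being bounded by $4\lambda_k^2$ since both semigroups are contractions), and handles the tail in $k$ by applying H\"older on the spot rather than invoking \eqref{star-3}--\eqref{star-3-bis}.
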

\begin{proof}
    In view of Lemma \ref{Qest1}, 
    \begin{equation*}
    \begin{aligned}
        \int_0^T s^{-2\alpha} \lVert S_\epsilon(s)& \rVert_{\mathcal{L}_2(U_0;H)}^2 ds
        =\sum_{k=1}^\infty \int_0^T s^{-2\alpha} \abs{S_\epsilon(s)Qe_k}_H^2 ds\leq   c_T\,\kappa_{Q, r}^{2/r} \int_0^T s^{-2\alpha} s^{-\frac{r-2}{r}} ds.
    \end{aligned}
    \end{equation*}   
    The integral is above is finite if and only if 
    \begin{equation*}  
         2\alpha+\frac{r-2}{r} = 1-2(1/r-\alpha) < 1.
    \end{equation*}
    Thus, if we  pick $\alpha \in \left(0, \frac{1}{r} \right)$ we obtain \eqref{bddeps}. In the same way, due to \eqref{star-8}, we obtain \eqref{bddbar}. 
    
Finally, let us prove \eqref{star-20}. Since both $S_\e(t)$ and $\bar{S}(t)^\vee$ are contractions, for every $N \in\,\mathbb{N}$ we have
\[\begin{aligned}
\lVert S_\epsilon(s)&-\bar{S}^\vee(s) \rVert_{\mathcal{L}_2(U_0;H)}^2=\sum_{k\leq N}\la_k^2\vert 	(S_\epsilon(s)-\bar{S}^\vee(s))e_k\vert_H^2+\sum_{k> N}\la_k^2\vert 	(S_\epsilon(s)-\bar{S}^\vee(s))e_k\vert_H^2\\[8pt]
&\leq \|Q\|^2_{\mathcal{L}(H)}\sum_{k\leq N}\vert 	(S_\epsilon(s)-\bar{S}^\vee(s))e_k\vert_H^2+c\,\left(\,\sum_{k> N}\la_k^r\right)^{\frac 2r}\left(\,\sum_{k> N}\vert (S_\epsilon(s)-\bar{S}^\vee(s))e_k\vert_H^2\right)^{\frac{r}{r-2}}
\end{aligned}\]
By proceeding as in the proofs of Lemma \ref{Qest1} and Lemma \ref{Qest2}, we have
\[\sum_{k> N}\vert (S_\epsilon(s)-\bar{S}^\vee(s))e_k\vert_H^2\leq c_T\,s^{-1}.\]
Moreover, due to Hypothesis \ref{finalhypQ}, for every $\eta>0$ we can find $N_\eta \in\,\mathbb{N}$ such that
 \[\left(\,\sum_{k> N_\eta}\la_k^r\right)^{\frac 2r}\leq \eta.\]
 Thus, we get
 \[\int_0^T \lVert S_\epsilon(s)-\bar{S}^\vee(s) \rVert_{\mathcal{L}_2(U_0;H)}^2\,ds\leq c\sum_{k\leq N_\eta}\int_0^T \vert(S_\epsilon(s)-\bar{S}^\vee(s))e_k\vert_H^2\,ds+c_T\,\eta\int_0^T s^{-\frac{r}{r-2}}\,ds.\]
  According to Theorem \ref{prevconv} and the Dominated Convergence theorem, we have
  \[\lim_{\epsilon\to 0} \sum_{k\leq N_\eta}\int_0^T \vert(S_\epsilon(s)-\bar{S}^\vee(s))e_k\vert_H^2\,ds=0,\]
  and then, due to the arbitrariness of $\eta>0$, we obtain \eqref{star-20}.
    
    \end{proof}

\begin{Remark}
   {\em  As a consequence of  \eqref{bddeps} and \eqref{bddbar}, for every $T>0$, $p \geq 1$ and $\epsilon >0$  we obtain the well-posedness in $L^p(\Omega; C([0,T];H))$ for equation \eqref{narrow SPDE} and the well-posedness in $L^p(\Omega; C(0,T;\bar{H}))$ for equation \eqref{graph SPDE}.}
\end{Remark}

Now, we can prove \eqref{star-2}.
    Let $\alpha \in (0,\frac{1}{2})$ as in Lemma \ref{hypAbstractQ}. From the stochastic factorization formula we get that for any $t \in [0,T]$
    \begin{equation*}
    \begin{aligned}
        \frac{\pi}{\sin \pi \alpha}(w_\epsilon(t)-w_{\bar{L}}(t)^\vee) =\int_0^t &(t-s)^{\alpha-1}S_\epsilon(t-s) Y_{\alpha,1}^\epsilon(s) ds\\[10pt]
        &+\int_0^t (t-s)^{\alpha-1}\left(S_\epsilon(t-s)-\bar{S}(t-s)^\vee \right) Y_{\alpha,2}(s) ds,
    \end{aligned}
    \end{equation*}
    where
    \begin{equation*}
        Y_{\alpha,1}^\epsilon(s) \coloneqq \int_0^s (s-\sigma)^{-\alpha}\left(S_\epsilon(s-\sigma)-\bar{S}(s-\sigma)^\vee \right) dw^Q(\sigma)
    \end{equation*}
    and
    \begin{equation*}
        Y_{\alpha,2}(s) \coloneqq \int_0^s (s-\sigma)^{-\alpha}\bar{S}(s-\sigma)^\vee dw^Q(\sigma).
    \end{equation*}
    Hence, for any $p > \frac{1}{\alpha}$, it holds
    \begin{equation*}
   	\begin{aligned}
        \mathbb{E}\sup_{t \in [0,T]} &\abs{w_\epsilon(t)-w_{\bar{L}}(t)^\vee}_H^p \\[8pt]
        &\leq  c_{p,\alpha,T} \left(\int_0^T \mathbb{E}\abs{Y_{\alpha,1}^\epsilon(s)}_H^p ds 
        +  \mathbb{E} \sup_{t \in\,[0,T]}\int_0^t \abs{ \left (S_\epsilon(t-s)-\bar{S}(t-s)^\vee \right)Y_{\alpha,2}(s)}_H^p ds\right).
    \end{aligned}
    \end{equation*}
    Now, for any fixed $s \geq 0$, we have
    \begin{equation*}
        \mathbb{E}\abs{Y_{\alpha,1}^\epsilon(s)}_H^p
        = c_p \left(\,\sum_{k=1}^\infty \int_0^s \sigma^{-2\alpha} \abs{S_\epsilon(\sigma)Qe_k-\bar{S}(\sigma)^\vee Qe_k}_H^2 d\sigma \right)^{p/2}.
    \end{equation*}
  According to \eqref{star-3} and \eqref{star-3-bis} for every $\eta>0$, we can find $N_{T,\eta} \in \mathbb{N}$ such that
    \begin{equation}\label{star-9}
        \sum_{k>N_{T, \eta}} \int_0^s \sigma^{-2\alpha} \abs{S_\epsilon(\sigma)Qe_k-\bar{S}(\sigma)^\vee Qe_k}_H^2 d\sigma
        \leq \eta.
    \end{equation}
    Moreover, as a consequence of  Theorem \ref{prevconv}, we have
    \begin{equation*}
        \lim_{\epsilon \to 0} \sum_{k\leq N_{T,\eta}} \int_0^s \sigma^{-2\alpha} \abs{S_\epsilon(\sigma)Qe_k-\bar{S}(\sigma)^\vee Qe_k}_H^2 d\sigma = 0.
    \end{equation*}
    Due to the  arbitrariness of $\eta>0$,  this, together with \eqref{star-9}, allows to conclude that 
    \begin{equation*}
        \lim_{\epsilon \to 0} \int_0^T \mathbb{E} \abs{Y_{\alpha,1}^\epsilon(s)}_H^p ds =0.
    \end{equation*}
    
    Next, let us fix $0 < \tau < T$. If $t \in\,[0,\tau]$ we have
    \begin{equation}\label{star-10}
        \int_0^t \abs{(S_\epsilon(t-s)-\bar{S}(t-s)^\vee)Y_{\alpha,2}(s)}_H^p ds \leq c\,\sqrt{\tau}\,\left(\int_0^T \abs{Y_{\alpha,2}(s)}_H^{2p} ds \right)^{1/2}.
        \end{equation}
        On the other hand, if $t \in\,(\tau,T]$, we have
        \begin{equation*}\begin{aligned}
      \int_0^t &\abs{(S_\epsilon(t-s)-\bar{S}(t-s)^\vee)Y_{\alpha,2}(s)}_H^p ds\\[10pt]
      \quad &  =  \int_0^{t-\tau} \abs{(S_\epsilon(t-s)-\bar{S}(t-s)^\vee)Y_{\alpha,2}(s)}_H^p ds
        +  \int_{t-\tau}^t \abs{(S_\epsilon(t-s)-\bar{S}(t-s)^\vee)Y_{\alpha,2}(s)}_H^p ds \\[10pt]
       &\quad  \quad  \leq \int_0^T \sup_{r \in [\tau,T]} \abs{(S_\epsilon(r)-\bar{S}(r)^\vee)Y_{\alpha,2}(s)}_H^p\, ds
        +c\, \sqrt{\tau} \left(\int_0^T \abs{Y_{\alpha,2}(s)}_H^{2p} ds \right)^{1/2}.
    \end{aligned}
    \end{equation*}
    Therefore, if we combine this inequality with \eqref{star-10}, we conclude that     \begin{equation*}\begin{aligned}
      \sup_{t \in\,[0,T]}\,\int_0^t &\abs{(S_\epsilon(t-s)-\bar{S}(t-s)^\vee)Y_{\alpha,2}(s)}_H^p ds\\[10pt]
     &\quad  \leq \int_0^T \sup_{r \in [\tau,T]} \abs{(S_\epsilon(r)-\bar{S}(r)^\vee)Y_{\alpha,2}(s)}_H^p\, ds
        +c\, \sqrt{\tau} \left(\int_0^T \abs{Y_{\alpha,2}(s)}_H^{2p} ds \right)^{1/2}.\end{aligned}
        \end{equation*} 
    Due to \eqref{bddbar}, we have
    \begin{equation*}
  \mathbb{E}\,\abs{Y_{\alpha,2}(s)}_H^{2p}
        =c_p \left(\sum_{k=1}^\infty \int_0^s \sigma^{-2\alpha} \abs{\bar{S}^\vee(\sigma)Qe_k }_H^2 d\sigma \right)^{p} \leq c_{p,T},\ \ \ \ \ \ s \in\,[0,T].
    \end{equation*}
    Hence, for any $\eta > 0$, there exists $\tau_{ \eta}>0$ such that
    \begin{equation*}
        c\,\sqrt{\tau_\eta}\left (\int_0^T \mathbb{E} \abs{Y_{\alpha,2}(s)}_H^{2p} ds \right)^{1/2} \leq  \eta.
    \end{equation*}
 As a consequence of Theorem \ref{prevconv} and the Dominated Convergence theorem, we have 
    \begin{equation*}
        \lim_{\epsilon \to 0} \int_0^T \mathbb{E} \sup_{r \in [\tau_\eta,T]} \abs{[S_\epsilon(r)-\bar{S}(r)^\vee]Y_{\alpha,2}(s)}_H^p ds = 0.
    \end{equation*}
    Since $\eta>0$ is arbitrary, we conclude that
    \begin{equation*}
        \lim_{\epsilon \to 0}\mathbb{E} \sup_{t \in\,[0,T]}\int_0^t \abs{\left[S_\epsilon(t-s)-\bar{S}(t-s)^\vee\right]Y_{\alpha,2}(s)}_H^p ds = 0,
    \end{equation*}
    and this completes the proof of \eqref{star-2}.

\section{The large deviation principle}
\label{secLDP}

In what follows, we will consider the following equation in the domain $G$ \begin{equation}\label{Mai
nSPDE}
    \begin{dcases}
        \displaystyle \partial_t u_\epsilon  (t,x,y)
        = \mathcal{L}_\epsilon u_\epsilon (t,x,y)
        + b(u_\epsilon(t,x,y))
        + \epsilon^{\,\delta/2}\,\partial_t w^Q (t,x,y),
\\[10pt]
        \displaystyle \frac{\partial u_\epsilon }{\partial \nu_\epsilon} (t,x,y) = 0 , (x,y) \in 	\partial G, \ \ \ \ \ \ \ 
        u_\epsilon(0,x,y)=u_0 (x,y).
    \end{dcases}
\end{equation}
As we explained in the Introduction, our goal is proving that   the family $\{\mathcal{L}( u^\epsilon  )\}_{\e \in\,(0,1)}$ satisfies a large deviation principle in the space $C([\tau,T];H)$, or, equivalently, $\{\mathcal{L}( (u^\epsilon )^\wedge )\}_{\e \in\,(0,1)}$ satisfies a large deviation principle in the space $C([\tau,T];\bar{H})$, for every $\tau \in\,(0,T)$. 

To this purpose, we first need to  introduce some notations. For every $M>0$, we denote 
\[    \mathcal{S}_M = \left\{  \varphi \in L^2(0,T;H)\ :\ \abs{\varphi}_{L^2(0,T;H)} \leq M \right\}.\]
Moreover, we denote
\[    \mathcal{P}_M = \left\{  \varphi \in \mathcal{P}\ :\ \varphi \in \mathcal{S}_M, \ \mathbb{P}-a.s. \right\},\]
where  $\mathcal{P}$ is the subspace of predictable processes in $L^2(\Omega; L^2(0,T;H))$.

Next, for every $\varphi \in L^2(0,T;H)$, we consider the following controlled equations in $G$
\begin{equation}\label{control narrow SPDE}
\begin{aligned}
    \begin{dcases}
        \displaystyle \partial_t u_\epsilon^\varphi (t,x,y)
        = \mathcal{L}_\epsilon u_\epsilon^\varphi (t,x,y)
        + b(u_\epsilon^\varphi(t,x,y))
        + (Q\varphi) (t,x,y)
        + \epsilon^{\,\delta/2}\,\partial w^Q_t,\\[10pt]
        \displaystyle \frac{\partial u_\epsilon^\varphi }{\partial \nu_\epsilon} (t,x,y) = 0,\ \ \  (x,y) \in 	\partial G, \ \ \ \ \ \ \ 
        u_\epsilon^\varphi(0,x,y)=u_0 (x,y).
    \end{dcases}
\end{aligned}
\end{equation} 
The same arguments used to prove the well-posedness of  equation \eqref{Mai nSPDE} can be used to prove that for every  $u_0 \in H$ and $\varphi \in L^2(0,T;H)$, there exists a unique mild solution $u_{\epsilon}^{\varphi}(t)$ to \eqref{control narrow SPDE} in $L^p(\Omega,C([0,T];H))$, for $p \geq 1$.
\begin{Lemma}\label{Lp estimate}
    Under Hypotheses \ref{hypB} and \ref{finalhypQ}, there exists some $c_{T, p}>0$ such that    \begin{equation}\label{star-12}
     \sup_{\epsilon \in (0,1)} \mathbb{E} \sup_{t \in [0,T]} \abs{u_{\epsilon}^{\varphi}(t)}^p_{H} \leq c_{T,p} 
    \left(1+\abs{u_0}_H^p+\abs{\varphi}_{L^2(0,T;H)}^{p} \right).    
\end{equation}
\end{Lemma}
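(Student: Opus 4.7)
The plan is to start from the mild formulation
\begin{equation*}
u_\epsilon^\varphi(t) = S_\epsilon(t) u_0 + \int_0^t S_\epsilon(t-s) B(u_\epsilon^\varphi(s))\,ds + \int_0^t S_\epsilon(t-s)(Q\varphi)(s)\,ds + \epsilon^{\delta/2}\, w_\epsilon(t),
\end{equation*}
to bound each of the four terms in $H$ uniformly in $\epsilon \in (0,1)$, and then to close the argument with Gronwall's lemma. Since $S_\epsilon(t)$ is a contraction in $H$, the initial-condition term is dominated by $\abs{u_0}_H$. For the deterministic control term, combining the contractivity of $S_\epsilon(t)$, the boundedness of $Q$ on $H$ and the Cauchy--Schwarz inequality yields
\begin{equation*}
\abs{\int_0^t S_\epsilon(t-s)(Q\varphi)(s)\,ds}_H \leq \|Q\|_{\mathcal{L}(H)}\sqrt{T}\,\abs{\varphi}_{L^2(0,T;H)}.
\end{equation*}

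For the stochastic convolution $w_\epsilon(t) := \int_0^t S_\epsilon(t-s)\,dw^Q(s)$, I would use the same factorization argument already exploited in Section \ref{secQ} to prove \eqref{star-2}. Fix $\alpha \in (0,1/r)$, so in particular $\alpha<1/2$, and, for $p > 1/\alpha$, write
\begin{equation*}
w_\epsilon(t) = \frac{\sin \pi\alpha}{\pi}\int_0^t (t-s)^{\alpha-1} S_\epsilon(t-s) Y_\alpha^\epsilon(s)\,ds, \qquad Y_\alpha^\epsilon(s) := \int_0^s (s-\sigma)^{-\alpha} S_\epsilon(s-\sigma)\,dw^Q(\sigma).
\end{equation*}
By Burkholder's inequality and \eqref{bddeps} in Lemma \ref{hypAbstractQ},
\begin{equation*}
\mathbb{E}\abs{Y_\alpha^\epsilon(s)}_H^p \leq c_p\left(\int_0^s \sigma^{-2\alpha}\|S_\epsilon(\sigma)\|_{\mathcal{L}_2(U_0;H)}^2\,d\sigma\right)^{p/2} \leq c_{p,T},
\end{equation*}
uniformly in $s \in [0,T]$ and $\epsilon \in (0,1)$. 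A H\"older estimate in the time variable, with conjugate exponent $q = p/(p-1) < 1/(1-\alpha)$, together with the contractivity of $S_\epsilon(t)$, then yields $\mathbb{E}\sup_{t \in [0,T]}\abs{w_\epsilon(t)}_H^p \leq c_{p,T}$ uniformly in $\epsilon$; the case $p \in [1, 1/\alpha]$ follows by Jensen's inequality.

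Finally, Hypothesis \ref{hypB} produces the linear growth bound $\abs{B(u)}_H \leq c(1+\abs{u}_H)$, so that
\begin{equation*}
\abs{\int_0^t S_\epsilon(t-s) B(u_\epsilon^\varphi(s))\,ds}_H \leq c\,T + c\int_0^t \abs{u_\epsilon^\varphi(s)}_H\,ds.
\end{equation*}
Raising everything to the $p$-th power, taking $\sup_{r \in [0,t]}$ and expectation, and combining with the three bounds above, one obtains
\begin{equation*}
\mathbb{E}\sup_{r \in [0,t]} \abs{u_\epsilon^\varphi(r)}_H^p \leq c_{p,T}\left(1+\abs{u_0}_H^p + \abs{\varphi}_{L^2(0,T;H)}^p\right) + c_{p,T}\int_0^t \mathbb{E}\sup_{r \in [0,s]} \abs{u_\epsilon^\varphi(r)}_H^p\,ds,
\end{equation*}
and an application of Gronwall's lemma produces \eqref{star-12}. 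No step is genuinely delicate; the only place where any care is needed is the uniform-in-$\epsilon$ control of the stochastic convolution under the weak-noise Hypothesis \ref{finalhypQ}, and this is exactly what the rough-noise bound \eqref{bddeps} from Section \ref{secQ} provides.
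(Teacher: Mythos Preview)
Your proof is correct and follows essentially the same strategy as the paper: mild formulation, contractivity of $S_\epsilon$ for the initial datum and control term, the uniform-in-$\epsilon$ bound \eqref{bddeps} for the stochastic convolution, linear growth of $B$, and Gronwall. The only minor difference is that you spell out the factorization argument for $\mathbb{E}\sup_{t}\abs{w_\epsilon(t)}_H^p$, whereas the paper invokes Burkholder--Davis--Gundy and \eqref{bddeps} (with $\alpha=0$) more directly; your treatment is in fact the more careful one, since $w_\epsilon$ is not a martingale.
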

\begin{proof}
Since $S_\epsilon(t)$ is contraction on $H$ and $B:H\to H$ is Lipschitz continuous, we have
\begin{equation*}
 \int_0^t\abs{B(u_{\epsilon}^{\varphi}(s))}^p_{H} ds
    \leq c_{T} \left (1+ \int_0^t\abs{u_{\epsilon}^{\varphi}(s)}^p_{H}\,  ds \right),\ \ \ \ \ \ t \in\,[0,T].
\end{equation*}
Therefore, since    
\begin{equation*}
    u_{\epsilon}^{\varphi}(t)
    =\int_0^t S_\epsilon(t-s) B(u_{\epsilon}^{\varphi}(s))\,ds+R_\e(t),
\end{equation*}
where
\[R_\e(t):=S_\epsilon(t)u_0
    +\int_0^t S_\epsilon(t-s) Q\varphi(s)ds
    +\epsilon^{\,\delta/2} \int_0^t S_\epsilon(t-s) dw^Q(s),\]
from  the Gronwall Lemma, we get 
\begin{equation*}
    \abs{u_{\epsilon}^{\varphi}(t)}^p_{H}
    \leq c_{T, p} 
    \left (1+\sup_{t \in [0,T]} \vert R_\epsilon(t) \vert_H^p\right).
\end{equation*}
In particular, this gives
\begin{equation*}
    \sup_{\epsilon \in (0,1)}  \mathbb{E} \sup_{t \in [0,T]} \abs{u_{\epsilon}^{\varphi}(t)}^p_{H}
    \leq c_{T, p} 
    \left (1+\sup_{\epsilon \in (0,1)} \mathbb{E} \sup_{t \in [0,T]} \vert R_\epsilon(t)\vert_H^p \right).
\end{equation*}
We have
\begin{equation*}
    \sup_{t \in [0,T]} \left|\int_0^t S_\epsilon(t-s) Q\varphi(s)ds\right|_H^p \leq c_{T,p} \left (\int_0^T\abs{\varphi(s)}^2_{H} ds \right )^{p/2} \leq c_{T,p}\,\abs{\varphi}_{L^2(0,T;H)}^{p}.
\end{equation*}
Moreover, by the Burkholder-Davis-Gundy inequality and \eqref{bddeps} (with $\alpha=0$), for every $\epsilon \in\,(0,1)$ we have
\begin{equation*}
     \mathbb{E} \sup_{t \in [0,T]}\left(\epsilon^{\,\delta/2}\,\left|w_\epsilon(t)\right|_H\right)^p 
    \leq   c_p   \left(   \int_0^t \Vert S_\epsilon(t-s)\Vert^2_{\mathcal{L}_2(U_0,H)} ds \right)^{p/2}\leq c_{T,p}.
\end{equation*}
Thus, since $\vert S_\e(t)u_0\vert_H^p\leq \vert u_0\vert_H^p$,
we conclude that
\begin{equation*}
    \sup_{\epsilon \in (0,1)} \mathbb{E} \sup_{t \in [0,T]} \abs{u_{\epsilon}^{\varphi}(t)}^p_{H}
    \leq c_{T,p} 
    \left(1+\abs{u_0}_H^p+\abs{\varphi}_{L^2(0,T;H)}^{p}\right).
\end{equation*}
\end{proof}

Next, we consider  the following controlled equation on the graph $\Gamma$
\begin{equation}\label{control graph SPDE}
    \begin{dcases}
        \displaystyle \partial_t\bar{u}^{\varphi}(t,x,k) = \bar{L}\bar{u}^{\varphi}(t,x,k)+b(\bar{u}^{\varphi}(t,x,k))+ (Q\varphi)^\wedge (t,x,k),\\[10pt]
        \displaystyle \bar{u}^{\varphi}(0,x,k)=u_0^\wedge(x,k).
    \end{dcases}
\end{equation}
As for equation \eqref{graph SPDE}, we have that for every $u_0 \in H$, $\varphi \in L^2(0,T;H)$ and $p\geq 1$, there exists a unique solution $\bar{u}^{\varphi}(t) \in\,L^p(\Omega,C([0,T];\bar{H}))$ to  \eqref{control graph SPDE}. Moreover,  
    \begin{equation}\label{star-13}
        \mathbb{E} \sup_{t \in [0,T]} \abs{\bar{u}^{\varphi}(t)}^p_{\bar{H}} \leq  c_{T,p} \left(1+\abs{u_0}_H^p+\abs{\varphi}_{L^2(0,T;H)}^{p} \right).
    \end{equation}

Now we can state our main theorem.
\begin{Theorem}\label{LDPT}
    Assume Hypotheses \ref{hypB}, and \ref{finalhypQ}. Then, for every $ 0 < \tau < T$, the family $\{\mathcal{L}(  u_\epsilon )\}_{\epsilon \in\,(0,1)}$ satisfies a large deviation principle in $C([\tau,T];H)$ with speed $\e^\delta$ and action functional 
    \begin{equation*}
       I_T(f)=\frac{1}{2}\, \inf  \abs{\varphi}_{L^2(0,T;H)}^2,
    \end{equation*}
    where the infimum is taken over all $\varphi \in L^2(0,T;H)$ such that $f=(\bar{u}^\varphi)^\vee|_{[\tau,T]} \in \,C([\tau,T];H)$ and $\bar{u}^\varphi$ is the solution to the controlled problem \eqref{control graph SPDE}.
\end{Theorem}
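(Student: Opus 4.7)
The approach is the weak convergence (Budhiraja--Dupuis--Maroulas) method for Laplace principles of SPDEs driven by cylindrical Wiener processes, adapted to the coupled averaging/small-noise regime. Writing $u_\e=\mathcal{G}^\e(\e^{\delta/2}w^Q)$ for a measurable solution map $\mathcal{G}^\e$, and setting $\mathcal{G}^0(\int_0^\cdot Q\varphi(s)\,ds):=(\bar{u}^{\varphi})^\vee|_{[\tau,T]}$ for $\varphi\in L^2(0,T;H)$, the LDP with speed $\e^\delta$ and rate $I_T$ reduces to the two conditions: \textbf{(a)} for every $M>0$, if $\varphi_n\rightharpoonup\varphi$ weakly in $\mathcal{S}_M$, then $(\bar{u}^{\varphi_n})^\vee|_{[\tau,T]}\to (\bar{u}^{\varphi})^\vee|_{[\tau,T]}$ in $C([\tau,T];H)$; and \textbf{(b)} for every $M>0$ and every $\{\varphi_\e\}\subset\mathcal{P}_M$ converging in distribution, as $\mathcal{S}_M$-valued random variables with the weak topology, to some $\varphi\in\mathcal{P}_M$, the solutions $u_\e^{\varphi_\e}$ of \eqref{control narrow SPDE} converge in distribution to $(\bar{u}^{\varphi})^\vee|_{[\tau,T]}$ in $C([\tau,T];H)$.

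For (a), I would first get uniform $L^\infty(0,T;\bar{H})$ bounds via \eqref{star-13}, then establish equicontinuity of $\{\bar{u}^{\varphi_n}\}$ in $C([\tau,T];\bar{H})$ by splitting the mild formulation into the three usual contributions (initial datum, nonlinearity, control) and exploiting the smoothing of $\bar{S}(t-s)$ for $t-s\geq \tau$, as encoded in the kernel bound \eqref{HKE for graph weak}. Along a subsequence, the linear control integral $\int_0^\cdot \bar{S}(\cdot-s)(Q\varphi_n)^\wedge(s)\,ds$ converges strongly in $C([\tau,T];\bar{H})$, because, on bounded sets of $L^2(0,T;H)$, this map is compact from the weak topology to $C([\tau,T];\bar{H})$; the nonlinear term passes to the limit by Lipschitz continuity of $B$ on $\bar{H}$, and uniqueness of the skeleton equation then upgrades the convergence to the whole sequence. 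The isometry $|f^\vee|_H=|f|_{\bar{H}}$ from Proposition \ref{opprop} finally transfers the convergence to $C([\tau,T];H)$.

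For (b), I decompose
\[
u_\e^{\varphi_\e}(t)-(\bar{u}^{\varphi})^\vee(t)=\bigl[u_\e^{\varphi_\e}(t)-(\bar{u}^{\varphi_\e})^\vee(t)\bigr]+\bigl[(\bar{u}^{\varphi_\e})^\vee(t)-(\bar{u}^{\varphi})^\vee(t)\bigr].
\]
The second bracket tends to zero in distribution in $C([\tau,T];H)$ by Skorohod representation combined with (a). For the first bracket, after a Gronwall reduction it suffices to prove that
\[
\int_0^\cdot \bigl[S_\e(\cdot-s)-\bar{S}(\cdot-s)^\vee\bigr]Q\varphi_\e(s)\,ds \longrightarrow 0
\]
in probability in $C([\tau,T];H)$, together with $\e^{\delta/2}w_\e\to 0$ in $L^p(\Omega;C([0,T];H))$ for $p$ large. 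The second limit follows from the stochastic factorization argument of Section \ref{secQ} together with \eqref{bddeps}, producing a bound of order $\e^{\delta/2}$. For the first, I would estimate pathwise using $|\varphi_\e|_{L^2(0,T;H)}\leq M$ almost surely and the strong semigroup convergence \eqref{star-20}; time-equicontinuity on $[\tau,T]$, needed to upgrade pointwise to uniform convergence, again comes from the kernel bounds \eqref{HKE for G} and \eqref{HKE for graph weak}.

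The key technical step, and the main obstacle, is exactly this uniform controlled version of Theorem \ref{newconv}: one needs to show that, in the coupled regime $\e\downarrow 0$ with $\varphi_\e\rightharpoonup\varphi$ in distribution, the $\e$-dependent semigroups $S_\e$ applied to random $L^2$-controls converge to the averaged semigroup $\bar{S}^\vee$ applied to the weak limit, uniformly in time on $[\tau,T]$. The positivity of $\tau$ is essential: it gives access to the kernel estimates \eqref{HKE for G}--\eqref{HKE for graph weak} and restores the compactness that is missing at $t=0$. Once this controlled convergence is established, the remainder of the argument proceeds by Gronwall-type estimates and contraction properties of the same flavor as in Lemma \ref{Lp estimate} and Section \ref{secQ}, and the two Budhiraja--Dupuis conditions (a)--(b) combine to yield the stated LDP in $C([\tau,T];H)$, from which the analogous principle in $C([\tau,T];\bar{H})$ for $u_\e^\wedge$ follows by the contraction principle via the continuous map $g\mapsto g^\wedge$ and the identity $(f^\vee)^\wedge=f$ in \eqref{star-7}.
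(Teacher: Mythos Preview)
Your overall framework—the Budhiraja--Dupuis--Maroulas weak convergence scheme—matches the paper, and you correctly identify the controlled convergence $u_\e^{\varphi_\e}\to(\bar u^\varphi)^\vee$ as the core issue. The route differs, however, in two linked respects.

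First, the paper does not pass through the intermediate term $(\bar u^{\varphi_\e})^\vee$: it decomposes $u_\e^{\varphi_\e}(t)-(\bar u^\varphi)^\vee(t)$ directly into five pieces (initial datum, Lipschitz nonlinear increment for Gronwall, semigroup mismatch on the nonlinearity, the control term $\Phi_\e(t)-\bar\Phi^\vee(t)$, and $\e^{\delta/2}w_\e$), and Claim~2 (your condition (a)) is then obtained for free as the deterministic special case of Claim~1, with no separate compactness analysis of the skeleton map.

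Second—and this is the main technical point you are missing—the paper handles the control term $\Phi_\e(t)=\int_0^t S_\e(t-s)Q\varphi_\e(s)\,ds$ by an \emph{energy estimate} (Lemma~\ref{Energy}) rather than by kernel bounds. Since $\Phi_\e$ solves $\partial_t\Phi_\e=\mathcal L_\e\Phi_\e+Q\varphi_\e$ with zero initial data, testing against $\partial_t\Phi_\e$ gives $\{\Phi_\e\}_{\e\in(0,1)}$ bounded in $L^\infty(0,T;H^1)\cap W^{1,2}(0,T;H)$ uniformly in $\e$, hence relatively compact in $C([0,T];H)$ by Aubin--Lions; the easy weak convergence $\Phi_\e\rightharpoonup\bar\Phi^\vee$ in $L^2(0,T;H)$ then upgrades to strong convergence in $C([0,T];H)$ on the \emph{whole} interval. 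Your proposal to extract the needed compactness (both for (a) and for the first bracket in (b)) from the pointwise kernel bound \eqref{HKE for graph weak} alone has a gap: that bound gives $\bar S^\vee(t):H\to L^\infty$, but no spatial regularity or compactness in $H$ or $\bar H$, so the asserted weak-to-strong continuity of the control map is not justified by the tools you cite. The energy estimate supplies the missing $H^1$ regularity directly from the PDE structure, and incidentally shows that the positivity of $\tau$ is \emph{not} needed for the control term—the restriction to $[\tau,T]$ in the final statement enters only through the initial-condition term $S_\e(t)u_0-\bar S^\vee(t)u_0$, via Theorem~\ref{prevconv}.
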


As shown in \cite{LDP}, Theorem \ref{LDPT} can be obtained by showing the following.

\begin{enumerate}
\item[Claim 1.]  For all $M>0$ and all $\{ \varphi_\epsilon \} \subset \mathcal{P}_M$ and $\varphi \in \mathcal{P}_M$ such that $\varphi_\epsilon \to \varphi$ in distribution with respect to the weak topology in $L^2(0,T;H)$ we have
   \begin{equation*}
       \lim_{\epsilon \to 0} \,u_\epsilon^{\varphi_\epsilon}
       = (\bar{u}^\varphi)^\vee,
   \end{equation*}
   in distribution in $C([\tau,T];H)$, for all $\tau \in\,(0,T)$.
   \item[Claim 2.]	For all $T,R>0$, the level set $\Phi_{T,R}:=\{  I_T \leq R \}$ is compact in $C([\tau,T];H)$, for all $\tau>0$.
\end{enumerate}

\begin{Remark}
{\em Notice that, due to Proposition \ref{opprop}, if Claim 1. holds then   
\begin{equation*}
       \lim_{\epsilon \to 0} \,(u_\epsilon^{\varphi_\epsilon})^\wedge
       = \bar{u}^\varphi,
   \end{equation*}
   in distribution in $C([\tau,T];\bar{H})$, for all $\tau \in\,(0,T)$. Moreover, if we define
   \begin{equation*}\bar{I}_T(f)=\frac{1}{2}\, \inf  \abs{\varphi}_{L^2(0,T;H)}^2,
    \end{equation*}
    where the infimum is taken over all $\varphi \in L^2(0,T;H)$ such that $f=\bar{u}^\varphi|_{[\tau,T]} \in \,C([\tau,T];\bar{H})$, from the compactness of $\Phi_{T, R}$ in $C([\tau,T];H)$ we get the compactness of 
$\bar{\Phi}_{T,R}:=\{\bar{I}_T\leq R\}$ in     $C([\tau,T];\bar{H})$.

In particular, if we prove that Claim 1. and Claim 2. we also get that the family $\{\mathcal{L}(  (u_\epsilon)^\wedge )\}_{\epsilon \in\,(0,1)}$ satisfies a large deviation principle in $C([\tau,T];\bar{H})$ with action functional $\bar{I}_T$.

}	
\end{Remark}

\section{From weak convergence to strong convergence}\label{secpriori}
We fix $M>0$ and consider a sequence $\{ \varphi_\epsilon \}_{\epsilon \in (0,1)} \subset \mathcal{P}_M$ and $\varphi \in \mathcal{P}_M$, such that $\varphi_\epsilon \to \varphi$, as $\epsilon\downarrow 0$, almost surely in $L_w^2(0,T;H)$, where $L_w^2(0,T;H)$ denotes the space $L^2(0,T;H)$, endowed with  the weak topology. 

In this section, our  goal in showing that
\begin{equation}
\label{star-14}
\lim_{\e\to0}\mathbb{E}\,\sup_{t \in\,[0,T]}\vert \Phi_\epsilon(t)-\bar{\Phi}(t)^\vee\vert_H^2=0,	
\end{equation}
where
\begin{equation} \label{star-31}
    \Phi_\epsilon(t) \coloneqq \int_0^t S_\epsilon(t-s)Q\varphi_\epsilon(s) ds,\ \ \ \ \ \  \bar{\Phi}(t):=\int_0^t \bar{S}(t-s)(Q\varphi(s))^\wedge ds .\end{equation}

\begin{Lemma}\label{Energy}
Under Hypotheses \ref{hypB} and \ref{finalhypQ}, for every $\epsilon \in\,(0,1)$ we have
    \begin{equation}\label{star-15}
        \abs{\Phi_\epsilon}_{L^\infty([0,T];H^1)}
        +\abs{\partial_t \Phi_\epsilon}_{L^2(0,T;H)} \leq c_{M,T},\ \ \ \ \ \mathbb{P}-\text{a.s.}
    \end{equation}
\end{Lemma}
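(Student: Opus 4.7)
\textbf{Proof proposal for Lemma \ref{Energy}.} The starting observation is that $\Phi_\epsilon(t)$ is the mild solution on $[0,T]$ of the deterministic inhomogeneous problem
\begin{equation*}
\partial_t \Phi_\epsilon = \mathcal{L}_\epsilon \Phi_\epsilon + Q\varphi_\epsilon, \qquad \Phi_\epsilon(0)=0,
\end{equation*}
endowed with the Neumann boundary condition $\partial \Phi_\epsilon/\partial\nu_\epsilon = 0$. Since $\varphi_\epsilon \in \mathcal{S}_M$ $\mathbb{P}$-a.s.\ and $Q \in \mathcal{L}(H)$, the forcing $Q\varphi_\epsilon$ belongs to $L^2(0,T;H)$, so by maximal $L^2$-regularity for the self-adjoint generator $\mathcal{L}_\epsilon$ (equivalently, by a Galerkin approximation in the eigenbasis of $\mathcal{L}_\epsilon$, followed by passing to the limit), $\Phi_\epsilon$ is in fact a strong solution with $\partial_t \Phi_\epsilon$, $\mathcal{L}_\epsilon \Phi_\epsilon \in L^2(0,T;H)$ and $\Phi_\epsilon(t)$ in the domain of $\mathcal{L}_\epsilon$ for a.e.\ $t$.

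The core of the proof is an energy estimate obtained by testing the equation against $\partial_t \Phi_\epsilon$. Integrating by parts and using the Neumann condition to kill the boundary term, one has
\begin{equation*}
\langle \mathcal{L}_\epsilon \Phi_\epsilon, \partial_t \Phi_\epsilon\rangle_H = -\frac{1}{2}\frac{d}{dt}\mathcal{E}_\epsilon(\Phi_\epsilon(t)),
\qquad
\mathcal{E}_\epsilon(\Phi) := \frac{1}{2}\int_G \bigl(|\partial_x \Phi|^2 + \epsilon^{-2}|\partial_y \Phi|^2\bigr)\,dx\,dy,
\end{equation*}
so that the equation yields
\begin{equation*}
\|\partial_t \Phi_\epsilon(t)\|_H^2 + \frac{1}{2}\frac{d}{dt}\mathcal{E}_\epsilon(\Phi_\epsilon(t)) = \langle Q\varphi_\epsilon(t), \partial_t \Phi_\epsilon(t)\rangle_H \leq \frac{1}{2}\|Q\varphi_\epsilon(t)\|_H^2 + \frac{1}{2}\|\partial_t \Phi_\epsilon(t)\|_H^2.
\end{equation*}
Integrating in $t$ from $0$ to any $t\leq T$, using $\Phi_\epsilon(0)=0$ and $\mathcal{E}_\epsilon(\Phi_\epsilon(0))=0$, and invoking $\|\varphi_\epsilon\|_{L^2(0,T;H)}\leq M$ a.s.\ gives
\begin{equation*}
\int_0^T \|\partial_t \Phi_\epsilon(s)\|_H^2\,ds + \sup_{t\in[0,T]} \mathcal{E}_\epsilon(\Phi_\epsilon(t)) \leq \|Q\|_{\mathcal{L}(H)}^2\,M^2.
\end{equation*}

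This single estimate delivers both parts of \eqref{star-15}: the second summand on the left gives the $\partial_t \Phi_\epsilon \in L^2(0,T;H)$ bound directly, while $\sup_t \mathcal{E}_\epsilon(\Phi_\epsilon(t))\leq c_{M,T}$ yields $\sup_t\|\partial_x \Phi_\epsilon(t)\|_H \leq c_{M,T}$ and, even more strongly, $\sup_t\|\partial_y \Phi_\epsilon(t)\|_H \leq \epsilon\,c_{M,T}$. Combining with the crude $L^\infty(0,T;H)$ bound
\begin{equation*}
\|\Phi_\epsilon(t)\|_H \leq \int_0^t \|S_\epsilon(t-s)Q\varphi_\epsilon(s)\|_H\,ds \leq \sqrt{T}\,\|Q\|_{\mathcal{L}(H)}\,M,
\end{equation*}
which uses that $S_\epsilon(\cdot)$ is a contraction on $H$, yields $\|\Phi_\epsilon\|_{L^\infty(0,T;H^1)}\leq c_{M,T}$ $\mathbb{P}$-a.s.

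The only real subtlety is the justification of the formal energy identity, since a priori $\Phi_\epsilon$ is only a mild solution. I expect this to be the main (but standard) obstacle: one resolves it by expanding $\varphi_\epsilon$ in the eigenbasis of $\mathcal{L}_\epsilon$, computing the corresponding energy identity for each finite-dimensional Galerkin projection (where everything is smooth), and then passing to the limit using the lower semicontinuity of the norms and the strong convergence of the approximations to $\Phi_\epsilon$ in $L^2(0,T;H)$. No additional structural feature of $G$ or of $\bar S$ is needed, and the estimate depends only on $M$ and $\|Q\|_{\mathcal{L}(H)}$, so it is uniform in $\epsilon\in(0,1)$ as required.
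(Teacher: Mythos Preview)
Your proof is correct and follows essentially the same approach as the paper: both recognize $\Phi_\epsilon$ as the solution of the inhomogeneous linear problem $\partial_t\Phi_\epsilon=\mathcal{L}_\epsilon\Phi_\epsilon+Q\varphi_\epsilon$ with zero initial data, test against $\partial_t\Phi_\epsilon$, use the identity $\langle \mathcal{L}_\epsilon\Phi_\epsilon,\partial_t\Phi_\epsilon\rangle_H=-\tfrac12\tfrac{d}{dt}\lvert\nabla_\epsilon\Phi_\epsilon\rvert_H^2$ (the paper writes $E_\epsilon(t)=\tfrac12\lvert\nabla_\epsilon\Phi_\epsilon(t)\rvert_H^2$, which coincides with your $\mathcal{E}_\epsilon$), and combine with the elementary contraction bound for $\lvert\Phi_\epsilon(t)\rvert_H$. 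Your version is in fact a bit more careful than the paper's, since you explicitly flag and resolve the regularity issue needed to justify the energy identity via Galerkin approximation, whereas the paper proceeds formally.
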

\begin{proof}
 The random function $\Phi_\e$ is a weak solution of the  random problem in $G$
 \begin{equation}
    \begin{dcases}
        \displaystyle \partial_t \Phi_\epsilon  (t,x,y)
        =\mathcal{L}_\epsilon \Phi_\epsilon (t,x,y)
        + Q\varphi_\epsilon (t,x,y) \\[10pt]
        \displaystyle \frac{\partial \Phi_\epsilon }{\partial \nu_\epsilon} (t,x,y) = 0,\ \ \  (x,y) \in 	\partial G,\ \ \ \ \ \ \  
        \Phi_\epsilon(0,x,y)=0.
    \end{dcases}
\end{equation}
It is immediate to check that
\begin{equation*}
        \sup_{0 \leq t \leq T}\abs{\Phi_\epsilon(t)}_H^2 
        \leq c_T \int_0^T \abs{S_\epsilon(t-s)Q\varphi_\epsilon(s) ds}_H^2 ds
        \leq c_{T} M^2.
    \end{equation*}
Moreover, if we define $\nabla_\epsilon:=(\partial_x,\epsilon^{-1}\,\partial_y)$, and $E_\e(t):= \frac 12\vert\nabla_\epsilon\Phi_\epsilon(t)\vert_H^2,$ we  have
    \begin{equation*}
    \begin{aligned}
         \frac{d E_\epsilon}{dt}(t) = \langle \nabla_\epsilon& \Phi_\epsilon, \nabla_\epsilon \partial_t \Phi_\epsilon \rangle_H
        = - 2\langle \mathcal{L}_\epsilon \Phi_\epsilon , \partial_t \Phi_\epsilon \rangle_H =  - 2\langle \partial_t \Phi_\epsilon , \partial_t \Phi_\epsilon \rangle_H 
        + 2\langle Q\varphi_\epsilon , \partial_t \Phi_\epsilon \rangle_H \\[10pt]
        &\quad \quad \leq  - 2\,\vert \partial_t \Phi_\epsilon\vert_H^2 
        + \vert Q\varphi_\epsilon\vert_H^2
        + \vert \partial_t \Phi_\epsilon \vert_H^2,
    \end{aligned}
    \end{equation*}
    so that
        \begin{equation*}
        \frac{dE_\epsilon}{dt}(t) + \vert \partial_t \Phi_\epsilon\vert_H^2
         \leq \vert  Q\varphi_\epsilon\vert_H^2
         \leq c\, M^2,\ \ \ \ \ \ \mathbb{P}-\text{a.s.}
    \end{equation*}
By integrating both sides, this gives
    \begin{equation*}
        E_\epsilon(t) -E_\epsilon(0) + \int_0^t \vert \partial_t \Phi_\epsilon\vert_H^2 ds \leq c\, M^2t,
    \end{equation*}
    and hence
    \begin{equation*}
        \sup_{0 \leq t \leq T}  \abs{\nabla_\epsilon \Phi_\epsilon}_H^2 +   
       \abs{\partial_t \Phi_\epsilon}_{L^2(0,T;H)}^2  \leq c\, M^2T.
    \end{equation*}
    Since we $\abs{\nabla \Phi_\epsilon}_H^2\leq \abs{\nabla_\epsilon \Phi_\epsilon}_H^2$, for $\epsilon \in (0,1)$,
    this implies  \eqref{star-15}.
\end{proof}
As a consequence of Lemma \ref{Energy}, we have 
\begin{equation}
\label{star-16}	
\{\Phi_\epsilon\}_{\e \in\,(0,1)}\subset \Lambda_{T,M},\ \ \ \ \ \mathbb{P}-\text{a.s.}
\end{equation}
where $\Lambda_{T,M}$ is the set of all $\Phi \in\,L^\infty(0,T;H^1)$, such that there exists $\partial_t\Phi \in\,L^2(0,T;H)$, with
\[\sup_{t \in\,[0,T]}\vert\Phi(t)\vert_{H^1}^2+\int_0^T\vert\partial_t\Phi(s)\vert_H^2\,ds\leq c_{T,M},\]
for some $c_{T, M}>0$.
Notice that by the Aubin-Lions Lemma, the set
$\Lambda_{T, M}$ is compactly embedded in  $C([0,T];H)$.

\begin{Lemma}\label{wtsg}
 Under Hypotheses \ref{hypB}, and \ref{finalhypQ}, we have
    \begin{equation}\label{convsemi} 
  \lim_{\epsilon\to 0}\,  \sup_{t \in [0,T]}\abs{\Phi_\e(t)
    -\Phi^\e(t)}_H = 0,\ \ \ \ \ \mathbb{P}-\text{a.s.} 
    \end{equation}
    where
    \[\Phi^\epsilon(t):=\int_0^t  \bar{S}(t-s)^\vee Q\varphi_\epsilon(s) \,ds.\]   
\end{Lemma}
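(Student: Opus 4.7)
The plan is to work $\omega$ by $\omega$, using that $|\varphi_\epsilon(\omega)|_{L^2(0,T;H)}\le M$ almost surely, and prove the estimate pointwise in $\omega$. Writing $T_\epsilon(r):=S_\epsilon(r)-\bar{S}(r)^\vee$, I note that both $S_\epsilon(r)$ and $\bar{S}(r)^\vee$ are contractions on $H$ (the second is contractive by Proposition \ref{opprop} and Theorem \ref{prevconv}), so $\lVert T_\epsilon(r)\rVert_{\mathcal{L}(H)}\le 2$. Given $\eta>0$, I would split
\[
\Phi_\epsilon(t)-\Phi^\epsilon(t)=\int_0^t T_\epsilon(t-s)\,Q\varphi_\epsilon(s)\,ds
\]
into a near-diagonal piece on $[(t-\delta)\vee 0,t]$ and a bulk piece on $[0,(t-\delta)\vee 0]$. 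The near-diagonal piece is handled by Cauchy–Schwarz together with the contraction bound:
\[
\sup_{t\in[0,T]}\Bigl|\int_{(t-\delta)\vee 0}^{t} T_\epsilon(t-s)Q\varphi_\epsilon(s)\,ds\Bigr|_H\le 2\lVert Q\rVert_{\mathcal{L}(H)}\,M\,\sqrt{\delta},
\]
and $\delta>0$ is fixed to make this quantity smaller than $\eta/3$.

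For the bulk integral, the key observation is that Theorem \ref{prevconv} gives only strong (not operator-norm) convergence of $S_\epsilon(r)$ to $\bar{S}(r)^\vee$ uniformly on $[\delta,T]$, so I must convert the integrand's $\epsilon$-dependence into a finite spectral sum. To that end, let $P_N$ be the orthogonal projection of $H$ onto $\mathrm{span}\{e_1,\dots,e_N\}$, and decompose $Q\varphi_\epsilon=QP_N\varphi_\epsilon+Q(I-P_N)\varphi_\epsilon$. Hypothesis \ref{finalhypQ} forces $\lambda_k\to 0$, so $\mu_N:=\sup_{k>N}\lambda_k\to 0$ as $N\to\infty$, and the tail contribution is controlled uniformly in $\epsilon$ by
\[
\sup_{t\in[0,T]}\Bigl|\int_0^{(t-\delta)\vee 0} T_\epsilon(t-s)Q(I-P_N)\varphi_\epsilon(s)\,ds\Bigr|_H\le 2\mu_N\sqrt{T}\,M,
\]
so I fix $N$ large enough that this is smaller than $\eta/3$.

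With $\delta$ and $N$ now frozen, the remaining finite-rank bulk piece is
\[
\sum_{k=1}^{N}\lambda_k\int_0^{(t-\delta)\vee 0}\langle\varphi_\epsilon(s),e_k\rangle_H\,T_\epsilon(t-s)e_k\,ds,
\]
and for each $k\le N$, Theorem \ref{prevconv} yields $\sup_{r\in[\delta,T]}|T_\epsilon(r)e_k|_H\to 0$ as $\epsilon\to 0$. By Cauchy–Schwarz in $s$, the $H$-norm of the $k$-th summand is at most $\lambda_k\,M\sqrt{T}\,\sup_{r\in[\delta,T]}|T_\epsilon(r)e_k|_H$, and summing the $N$ terms gives a bound that tends to $0$ uniformly in $t\in[0,T]$ and in $\omega$. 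Choosing $\epsilon$ small enough makes this last piece smaller than $\eta/3$, and combining the three estimates yields $\sup_{t\in[0,T]}|\Phi_\epsilon(t)-\Phi^\epsilon(t)|_H<\eta$ for all $\epsilon$ small. The main obstacle is precisely the absence of uniform operator-norm convergence of $S_\epsilon(r)$ to $\bar{S}(r)^\vee$ as $r\downarrow 0$, which is the reason for the double truncation (in $s$ near the singularity, and in $k$ via $P_N$) before invoking the pointwise convergence of the semigroups.
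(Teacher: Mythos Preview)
Your proof is correct, but it takes a different route from the paper's. The paper dispatches the lemma in one stroke: by Cauchy--Schwarz,
\[
\sup_{t\in[0,T]}\bigl|\Phi_\epsilon(t)-\Phi^\epsilon(t)\bigr|_H
\;\le\; M\Bigl(\int_0^T \lVert (S_\epsilon(s)-\bar{S}(s)^\vee)Q\rVert_{\mathcal{L}(H)}^2\,ds\Bigr)^{1/2}
\;\le\; M\Bigl(\int_0^T \lVert S_\epsilon(s)-\bar{S}(s)^\vee\rVert_{\mathcal{L}_2(U_0,H)}^2\,ds\Bigr)^{1/2},
\]
and then invokes the earlier limit \eqref{star-20} from Lemma~\ref{hypAbstractQ}. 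That limit, in turn, was proved using the heat-kernel bounds \eqref{HKE for G} and \eqref{HKE for graph weak} to control the tail $\sum_{k>N}$ uniformly down to $s=0$, together with Theorem~\ref{prevconv} for the finite part. So the paper absorbs the $s\downarrow 0$ singularity via the kernel estimates and then quotes a ready-made Hilbert--Schmidt convergence.

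Your argument avoids the heat-kernel machinery and \eqref{star-20} entirely: the near-diagonal cut in $s$ replaces the kernel estimate by the trivial contraction bound $\lVert T_\epsilon(r)\rVert\le 2$, and your spectral truncation uses only the weaker consequence $\lambda_k\to 0$ of Hypothesis~\ref{finalhypQ} rather than the full summability. The cost is the explicit double truncation; the gain is a more elementary, self-contained proof that in fact works under the weaker assumption that $Q$ is merely compact. The paper's version is shorter because it front-loads the work into Lemma~\ref{hypAbstractQ}, where essentially the same finite-rank/tail split is carried out once and for all at the level of the $\mathcal{L}_2(U_0,H)$ norm.
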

\begin{proof}
We have
\[\begin{aligned}
&\vert \Phi_\e(t)
    -\Phi^\e(t)\vert_H\leq \int_0^t\vert (S_\e(t-s)-\bar{S}(t-s)^\vee)Q\varphi_\e(s)\vert_H\,ds	\\[10pt]
    &\leq \left(\int_0^t \Vert (S_\e(s)-\bar{S}(s)^\vee)Q\Vert_H^2\,ds\right)^{1/2}\!\!\vert \varphi_\epsilon\vert_{L^2(0,T;H)}\leq \left(\int_0^t \Vert S_\e(s)-\bar{S}(s)^\vee\Vert_{\mathcal{L}_2(U_0,H)}^2\,ds\right)^{1/2}\!\!M.
\end{aligned}
\]
Thus, we obtain \eqref{convsemi} due to \eqref{star-20}.
\end{proof}

Now we can prove the following result.
\begin{Lemma}
    For every $\psi \in L^2(0,T;H)$, we have
    \begin{equation*}
    \begin{aligned}
       \lim_{\e\to 0} \langle \Phi_\epsilon-\bar{\Phi}^\vee ,\psi\rangle_{L^2(0,T;H)} =0.    \end{aligned}
    \end{equation*}
\end{Lemma}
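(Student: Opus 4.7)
The natural strategy is to decompose
\[
\Phi_\epsilon - \bar{\Phi}^\vee = \bigl(\Phi_\epsilon - \Phi^\epsilon\bigr) + \bigl(\Phi^\epsilon - \bar{\Phi}^\vee\bigr),
\]
where $\Phi^\epsilon$ is the auxiliary process introduced in Lemma \ref{wtsg}, and show that the pairing with $\psi$ vanishes for each piece separately, almost surely.

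For the first piece, Lemma \ref{wtsg} already yields $\sup_{t\in[0,T]}|\Phi_\epsilon(t)-\Phi^\epsilon(t)|_H\to 0$ almost surely, so by Cauchy--Schwarz
\[
\bigl|\langle \Phi_\epsilon-\Phi^\epsilon,\psi\rangle_{L^2(0,T;H)}\bigr|\leq \sqrt{T}\,\sup_{t\in[0,T]}\abs{\Phi_\epsilon(t)-\Phi^\epsilon(t)}_H\,\abs{\psi}_{L^2(0,T;H)}\to 0, \qquad \mathbb{P}\text{-a.s.}
\]

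For the second piece, $\Phi^\epsilon-\bar{\Phi}^\vee$ is precisely the stochastic convolution of $Q(\varphi_\epsilon-\varphi)$ against the contraction semigroup $\bar{S}(\cdot)^\vee$ on $H$. I would apply Fubini together with the self-adjointness of $Q$ to get
\[
\langle \Phi^\epsilon-\bar{\Phi}^\vee,\psi\rangle_{L^2(0,T;H)}=\int_0^T \Bigl\langle \varphi_\epsilon(s)-\varphi(s),\,\Psi(s)\Bigr\rangle_H\,ds,
\]
where
\[
\Psi(s):=Q\int_s^T \bar{S}(t-s)^{\vee *}\,\psi(t)\,dt.
\]
Because $\bar{S}(t)^\vee$ is a contraction on $H$ (by Proposition \ref{opprop} and Theorem \ref{prevconv}), so is its adjoint, and hence
\[
|\Psi(s)|_H\leq \|Q\|_{\mathcal{L}(H)}\int_s^T |\psi(t)|_H\,dt\leq \|Q\|_{\mathcal{L}(H)}\sqrt{T}\,|\psi|_{L^2(0,T;H)},
\]
so $\Psi\in L^\infty(0,T;H)\subset L^2(0,T;H)$. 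Since by hypothesis $\varphi_\epsilon\to\varphi$ almost surely in $L^2_w(0,T;H)$, the pairing with the fixed element $\Psi$ converges to $0$ almost surely.

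\textbf{Main obstacle.} The routine bookkeeping is clear; the only slightly delicate point is the Fubini/adjoint manipulation, which requires $\bar{S}(t)^\vee$ to be a genuine $C_0$ contraction semigroup on the Hilbert space $H$ (so that its adjoint makes sense and is also a contraction). This is exactly what Proposition \ref{opprop} together with Theorem \ref{prevconv} provides, so once the semigroup framework is in place the argument reduces to checking that the test function $\Psi$ lies in $L^2(0,T;H)$, which is the computation above.
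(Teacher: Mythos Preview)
Your proposal is correct and follows essentially the same approach as the paper: the identical decomposition $\Phi_\epsilon-\bar{\Phi}^\vee=(\Phi_\epsilon-\Phi^\epsilon)+(\Phi^\epsilon-\bar{\Phi}^\vee)$, Lemma \ref{wtsg} for the first piece, and the observation that the second piece is a bounded linear functional on $L^2(0,T;H)$ evaluated at $\varphi_\epsilon-\varphi$, whence weak convergence concludes. The only difference is cosmetic: the paper simply asserts that $R_\psi(\varphi):=\int_0^T\langle\int_0^t \bar{S}(t-s)^\vee Q\varphi(s)\,ds,\psi(t)\rangle_H\,dt$ is bounded, while you explicitly compute its Riesz representative $\Psi$ via Fubini and the adjoint; your concern about needing a $C_0$ semigroup is unnecessary, since the adjoint of each bounded operator $\bar{S}(t)^\vee$ exists regardless, and measurability plus the contraction bound suffice for Fubini.
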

\begin{proof} Under Hypotheses \ref{hypB} and \ref{finalhypQ}, we have
    \begin{equation*}
    \begin{aligned}
    \langle \Phi_\epsilon-&\bar{\Phi}^\vee ,\psi\rangle_{L^2(0,T;H)}=\langle \Phi_\epsilon-\Phi^\e,\psi\rangle_{L^2(0,T;H)}+\langle \Phi^\epsilon-\bar{\Phi}^\vee ,\psi\rangle_{L^2(0,T;H)}=:I_{\e,1}+I_{\e,2}.\end{aligned}
    \end{equation*}
      By Lemma \ref{wtsg}, we have that
      \[\lim_{\e\to 0} I_{\epsilon,1}=0,\ \ \ \ \ \mathbb{P}-\text{a.s.}\]     
 Hence, we only need to prove that
 \begin{equation}\label{star-22}\lim_{\e\to 0} I_{\epsilon, 2}=0,\ \ \ \ \ \mathbb{P}-\text{a.s.}\end{equation}
If for every $\psi \in\,L^2(0,T;H)$, we define
\[R_\psi(\varphi) \coloneqq \int_0^T\langle\int_0^t \bar{S}(t-s)^\vee Q\varphi(s)\,ds, \psi(t) \rangle_{H}\,dt,\] 
we have that
\[I_{\epsilon,2}=R_\psi(\varphi_\e-\varphi).\]
It is immediate to check  that the linear operator $R_\psi$ is  bounded  in $L^2(0,T;H)$. Then,  since $\varphi_\e$ converges $\mathbb{P}$-a.s. to $\varphi$  in $L_w^2(0,T;H)$, we conclude that \eqref{star-22} holds. 
\end{proof}

We have just proved that $\Phi_\epsilon$ converges weakly to $\bar{\Phi}^\vee$ in $L^2(0,T;H)$. Moreover, as we have seen above
\[\{\Phi_\e\}_{\epsilon \in\,(0,1)}\subset \Lambda_{T,M},\ \ \ \ \ \mathbb{P}-\text{a.s.}\] 
Due to the compactness of $\Lambda_{T,M}$ in $C([0,T];H)$, for every sequence $  \epsilon_n \to 0$, there exists a subsequence $ \{ \epsilon_{n_k} \} \subset \{ \epsilon_n \}$ and $\Tilde{\Phi} \in C([0,T];H)$ such that $\Phi_{\epsilon_{n_k}} \to \Tilde{\Phi}$ in $C([0,T];H)$.
By the uniqueness of the limit, $\Tilde{\Phi} = \bar{\Phi}^\vee$ and thus we obtain  the following result.

\begin{Proposition} Assume Hypotheses \ref{hypB} and \ref{finalhypQ}. Then, for every sequence $\{ \varphi_\epsilon \}_{\epsilon \in (0,1)} \subset \mathcal{P}_M$ and $\varphi \in \mathcal{P}_M$, such that $\varphi_\epsilon$ converges to $\varphi$, as $\epsilon\downarrow 0$, almost surely,  in $L_w^2(0,T;H)$, we have
\begin{equation*}
  \lim_{\e\to 0}\sup_{t \in\,[0,T]}\left|  \int_0^t S_\epsilon(t-s) Q\varphi_\epsilon(s) ds-  \int_0^t \bar{S}^\vee(t-s) Q\varphi(s) ds\right|_H=0,\ \ \ \ \ \ \mathbb{P}-\text{a.s.}
\end{equation*}
\end{Proposition}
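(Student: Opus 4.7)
\medskip
\noindent\textbf{Proof plan.} My plan is to combine the a.s. weak $L^2$-convergence $\Phi_\epsilon \rightharpoonup \bar{\Phi}^\vee$ established in the previous Lemma with the uniform a.s. compactness bound on $\Phi_\epsilon$ supplied by Lemma \ref{Energy}, and then close the argument by a standard Urysohn subsequence principle.

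First, I would invoke Lemma \ref{Energy} to record that, almost surely, $\{\Phi_\epsilon\}_{\epsilon \in (0,1)}$ is contained in the set $\Lambda_{T,M}$ of functions $\Phi \in L^\infty(0,T;H^1)$ with $\partial_t \Phi \in L^2(0,T;H)$ satisfying a uniform bound depending only on $T$ and $M$. Since $H^1 = W^{1,2}(G)$ is compactly embedded in $H = L^2(G)$ (as $G$ is a bounded smooth planar domain), the Aubin--Lions lemma implies that the embedding $\Lambda_{T,M} \hookrightarrow C([0,T];H)$ is compact. Hence, outside a $\mathbb{P}$-null set, every subsequence of $\{\Phi_\epsilon\}$ has a further subsequence converging in $C([0,T];H)$ to some $\tilde{\Phi} \in C([0,T];H)$.

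Next, I would identify every such accumulation point with $\bar{\Phi}^\vee$. Indeed, convergence in $C([0,T];H)$ implies convergence in $L^2(0,T;H)$ and, in particular, weak convergence in $L^2(0,T;H)$. On the other hand, the preceding Lemma (i.e., $\langle \Phi_\epsilon - \bar{\Phi}^\vee, \psi \rangle_{L^2(0,T;H)} \to 0$ a.s. for every $\psi \in L^2(0,T;H)$) shows that $\Phi_\epsilon \rightharpoonup \bar{\Phi}^\vee$ weakly in $L^2(0,T;H)$ almost surely. By the uniqueness of weak limits, $\tilde{\Phi} = \bar{\Phi}^\vee$ almost surely.

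Finally, I would apply the Urysohn subsequence principle: since every subsequence of $\{\Phi_\epsilon\}$ admits a further subsequence converging in $C([0,T];H)$ to the common limit $\bar{\Phi}^\vee$, the entire sequence converges to $\bar{\Phi}^\vee$ in $C([0,T];H)$, which is exactly \eqref{star-14} in the almost sure sense, i.e., the claim of the Proposition. I do not foresee any genuine obstacle; the only subtlety is that the compactness and weak-convergence arguments must both be performed pathwise on the same full-measure event, which is immediate because each of the two inputs (the Energy bound and the weak convergence lemma) already holds $\mathbb{P}$-almost surely.
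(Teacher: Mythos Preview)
Your proposal is correct and follows essentially the same approach as the paper: compactness of $\{\Phi_\epsilon\}$ in $C([0,T];H)$ via Lemma \ref{Energy} and Aubin--Lions, identification of every $C([0,T];H)$-accumulation point with $\bar{\Phi}^\vee$ via the weak $L^2$-limit from the preceding lemma, and then the subsequence principle. Your write-up is slightly more explicit about the pathwise nature of the argument and the Urysohn principle, but the strategy is identical.
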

In particular, 
since $ \{\varphi_\epsilon \}, \varphi \subset \mathcal{P}_M$ and $\varphi \in\,\mathcal{P}_M$, by the  Dominated Convergence theorem we have
    \begin{equation}\label{wts}
        \lim_{\epsilon \to 0} \mathbb{E} \sup_{t \in [0,T]} \abs{\int_0^t S_\epsilon(t-s) Q\varphi_\epsilon(s) ds -   \int_0^t \bar{S}^\vee(t-s) Q\varphi(s) ds}_H^2 =0.   
    \end{equation}

\section{Proof of Theorem \ref{LDPT}} 
\label{secpf}

As we have seen in Section \ref{secLDP}, the proof of Theorem \ref{LDPT} follows from the proof of Claim 1. and Claim 2.

\subsection{Proof of Claim 1.}
\label{secpf1}

Let  $\{\varphi_\epsilon\}_{\epsilon>0}$ be an arbitrary family of processes in $ \mathcal{P}_{M}$ converging in distribution, with respect to the weak topology of $L^2(0,T;H)$, to some 
  $\varphi \in\,\mathcal{P}_{M}$.  As a consequence of the  Skorohod theorem, we can assume that the sequence $\{\varphi_\epsilon\}_{\epsilon>0}$  converges $\mathbb{P}$-a.s. to $\varphi$, with respect to the weak topology of  $L^2(0,T;H)$. We will prove that this implies that 
\begin{equation}\label{star-28}
    \lim_{\epsilon \to 0} \mathbb{E} \sup_{t\in [\tau,T]}\abs{ u_{\epsilon}^{\varphi_\epsilon}(t)-(\bar{u}^\varphi)^\vee(t) }_{H}^2 = \lim_{\epsilon \to 0} \mathbb{E} \sup_{t\in [\tau,T]}\abs{ (u^{\varphi_\e}_{\epsilon})^\wedge(t)-\bar{u}^\varphi(t) }_{\bar{H}}^2 =  0.
\end{equation} 
Due to \eqref{star-7} and \eqref{star-30}, we have
\[(\bar{S}(t-s)B(\bar{u}^\varphi(s)))^\vee=(\bar{S}(t-s)(B(\bar{u}^\varphi(s))^\vee)^\wedge)^\vee=\bar{S}^\vee(t-s)B((\bar{u}^\varphi)^\vee(s)).\]
Thus, with the notations introduced in \eqref{star-30} and \eqref{star-31}, we have
\begin{equation*}
\begin{aligned}
  \vert u_{\epsilon}^{\varphi_\epsilon}(t)-&(\bar{u}^\varphi)^\vee(t) \vert_{H}^2
    \leq c_T \Big(  \int_0^t \abs{  S_\epsilon(t-s) B(u_{\epsilon}^{\varphi_\epsilon}(s))
    - S_\epsilon(t-s) B((\bar{u}^\varphi)^\vee(s))  }_{H}^2 ds\\[10pt]
    & +\abs{ S_{\epsilon}(t)u_0-\bar{S}^\vee(t)u_0 }_{H}^2  +\int_0^t \abs{  S_\epsilon(t-s) B((\bar{u}^\varphi)^\vee(s))
    - \bar{S}^\vee(t-s) B((\bar{u}^\varphi)^\vee(s))  }_{H}^2 ds\\[10pt]
    &\quad \quad \quad \quad \quad  +\abs{\Phi_\epsilon(t)-\bar{\Phi}^\vee(t)}_{H}^2  + \e^\delta\,\abs{  w_\epsilon(t) }_{H}^2 \Big)  =:c_T\,\sum_{k=1}^5 I_{k,\epsilon}(t).
\end{aligned}
\end{equation*} 
For $I_{1,\epsilon}(t)$, since $S_\epsilon(t)$ is a contraction  on $H$ and $B$ is Lipschitz continuous, we have
\begin{equation*}
\begin{aligned}
    I_{1,\epsilon}(t) \leq  \int_0^t \abs{  B(u_{\epsilon}^{\varphi_\epsilon}(s))
    -  B((\bar{u}^\varphi)^\vee(s))  }_{H}^2 ds \leq c \int_0^t \abs{  u_{\epsilon}^{\varphi_\epsilon}(s)
    -  (\bar{u}^\varphi)^\vee(s)  }_{H}^2 ds.
\end{aligned}
\end{equation*}
Thanks to Gr\"{o}nwall's inequality, this implies
\begin{equation}\label{star-27}
    \abs{ u_{\epsilon}^{\varphi_\epsilon}(t)-(\bar{u}^\varphi)^\vee(t) }_{H}^2 \leq c_T \left(R_\epsilon(t)+\int_0^t R_\epsilon(s)ds \right),
\end{equation}
where
\[R_\e(t):=c_T\,\sum_{k=2}^5 I_{k,\epsilon}(t).\]
In particular, for every $\tau \in\,(0,T)$, we get
\begin{equation}\label{Gronwall re}
    \mathbb{E} \sup_{t \in [\tau,T]} \abs{ u_{\epsilon}^{\varphi_\epsilon}(t)-(\bar{u}^\varphi)^\vee(t) }_{H}^2 \leq c_T \left( \mathbb{E} \sup_{t \in [\tau,T]} R_\epsilon(t)+ \int_0^T \mathbb{E} \,R_\epsilon(s) ds \right).
\end{equation}

In view of  Theorem \ref{prevconv}, we have
\begin{equation}\label{MTconv1}
    \lim_{\e\to 0} \mathbb{E} \sup_{t \in [\tau,T]} I_{2,\epsilon}(t) 
    =\lim_{\e\to 0} \sup_{t \in [\tau,T]}\abs{ S_{\epsilon}(t)u_0-\bar{S}^\vee(t)u_0 }_{H}^2=0.
\end{equation}
Next, if we fix an arbitrary $\vartheta \in\,(0,T)$, we have
\begin{equation}
\label{star-25}
\mathbb{E}\sup_{t \in\,[0,\vartheta]}\,I_{3,\epsilon}(t)\leq 	c\,\int_0^\vartheta \left( 1+\abs{ \bar{u}^\varphi(s)}_{\bar{H}}^2 \right) ds\leq \vartheta\, c\,  \left(1+ \abs{ \bar{u}^\varphi}_{C([0,T];\bar{H})}^2 \right).
\end{equation}
Moreover, if $t \in\,(\vartheta,T]$, we have
\begin{equation*}
\begin{aligned}
    I_{3,\epsilon}(t) 
    =& \int_0^{t-\vartheta} \abs{  S_\epsilon(t-s) B((\bar{u}^\varphi)^\vee(s))
     - \bar{S}^\vee(t-s) B((\bar{u}^\varphi)^\vee(s))  }_{H}^2 ds \\[10pt]
     & \quad \quad \quad \quad \quad +  \int_{t-\vartheta}^t \abs{  S_\epsilon(t-s) B((\bar{u}^\varphi)^\vee(s))
     - \bar{S}^\vee(t-s) B((\bar{u}^\varphi)^\vee(s))  }_{H}^2 ds \\[10pt]
    \leq &\int_0^T \sup_{r \in [\vartheta,T]} \abs{  S_\epsilon(r) B((\bar{u}^\varphi)^\vee(s))
     - \bar{S}^\vee(r) B((\bar{u}^\varphi)^\vee(s))  }_{H}^2 ds +c \int_{t-\vartheta}^t \left( 1+\abs{ \bar{u}^\varphi(s)}_{\bar{H}}^2 \right) ds \\[10pt]
     \leq & \int_0^T \sup_{r \in [\vartheta,T]} \abs{  S_\epsilon(r) B((\bar{u}^\varphi)^\vee(s))
     - \bar{S}^\vee(r) B((\bar{u}^\varphi)^\vee(s))  }_{H}^2 ds +\vartheta\, c\,  \left(1+ \abs{ \bar{u}^\varphi}_{C([0,T];\bar{H})}^2 \right).  
\end{aligned}
\end{equation*}
This, together with \eqref{star-25}, implies that
\[
\begin{aligned}
\mathbb{E}&\sup_{t \in\,[0,T]}\,I_{3,\epsilon}(t)\\[10pt]
&\leq
\mathbb{E}\int_0^T \sup_{r \in [\vartheta,T]} \abs{  S_\epsilon(r) B((\bar{u}^\varphi)^\vee(s))
     - \bar{S}^\vee(r) B((\bar{u}^\varphi)^\vee(s))  }_{H}^2 ds +\vartheta\, c\,  \left(1+ \mathbb{E}\,\abs{ \bar{u}^\varphi}_{C([0,T];\bar{H})}^2 \right).	
\end{aligned}
\]
Thanks to \eqref{star-13}, for $\eta > 0$, there exists $\vartheta_\eta>0$ such that
\begin{equation*}
    \vartheta_\eta\, c\,  \left(1+ \mathbb{E}\,\abs{ \bar{u}^\varphi}_{C([0,T];\bar{H})}^2 \right) < \frac{\eta}{2}.
\end{equation*}
Then, by Theorem \ref{prevconv}, we can pick $\epsilon_\eta > 0$, such that 
\begin{equation*}
    \mathbb{E} \int_0^T \sup_{r \in [\vartheta_\eta,T]} \abs{  S_{\epsilon}(r) B((\bar{u}^\varphi)^\vee(s))
     - \bar{S}^\vee(r) B((\bar{u}^\varphi)^\vee(s))  }_{H}^2 ds < \frac{\eta}{2},\ \ \ \ \ \e\leq \e
     _\eta,
\end{equation*}
and due to the arbitrariness of $\eta>0$, this allows to conclude that 
\begin{equation}\label{MTconv3}
   \lim_{\epsilon\to 0} \mathbb{E} \sup_{t \in [0, T]}I_{3,\epsilon}(t) =  0.
\end{equation}
As a consequence of  \eqref{wts}, we have 
\begin{equation}\label{MTconv4}
    \lim_{\epsilon\to 0}\,\mathbb{E} \sup_{t \in [0,T]} I_{4,\epsilon}(t)=0.
\end{equation}
Moreover, due to  \eqref{bddeps}, we have
\[\mathbb{E} \sup_{t \in [0,T]} I_{5,\epsilon}(t) 
    = \epsilon^{\,\delta}\,\mathbb{E} \sup_{t \in [0, T]} \abs{  \int_0^t S_\epsilon(t-s) dw^Q(s)  }_{H}^2 
    \leq c\, \epsilon^{\,\delta} \int_0^T \Vert S_\epsilon(s)\Vert_{\mathcal{L}_2(U_0,H)}^2 ds,\]
    so that
\begin{equation}\label{MTconv5}
\lim_{\e\to 0} \mathbb{E} \sup_{t \in [0,T]} I_{5,\epsilon}(t) =0.     
\end{equation}

Therefore, if we combine together \eqref{MTconv1}, \eqref{MTconv3}, \eqref{MTconv4}, \eqref{MTconv5}, we obtain
\begin{equation}\label{re conv 0}
    \lim_{\epsilon \to 0} \mathbb{E} \sup_{t \in [\tau,T]} R_\epsilon(t) =0,\ \ \ \ \ \tau \in\,(0,T).
\end{equation}
In particular, since 
\[\sup_{\epsilon \in\,(0,1)}\mathbb{E}\sup_{t \in [\tau,T]} R_\epsilon(t) <\infty,\]
by the Dominated Convergence theorem we get
\[\lim_{\epsilon\to 0}\int_0^T\mathbb{E} R_\epsilon(t)\,dt=0.\]
Due to \eqref{star-27}, this together with \eqref{re conv 0} implies \eqref{star-28}.

\subsection{Proof of Claim 2.}

In the proof of Claim 1. we have seen that if $\varphi_\e$ converges $\mathbb{P}$-a.s. to $\varphi$, in $L^2_w(0,T;H)$, where we recall $L^2_w(0,T;H)$ denotes the space $L^2(0,T;H)$ endowed with  the weak topology, then \eqref{star-28} holds. In particular, \eqref{star-28} holds in the deterministic case, so that 
 the mapping
\[\varphi \in\,L^2_w(0,T;H)\mapsto (\bar{u}^\varphi)^\vee \in\,C([0,T];H),\]
is continuous,  and for every $c>0$
\begin{equation}
\label{r3-4}
\bigcap_{\lambda \in\,(0,1)}\left\{(\bar{u}^\varphi)^\vee\, ,\, \varphi \in\,\mathcal{S}_{c+\lambda}\right\}=	\left\{(\bar{u}^\varphi)^\vee\, ,\, \varphi \in\,\mathcal{S}_{c}\right\}.
\end{equation}
 Moreover,  the set $ {\mathcal{S}}_{c}$ is compact in $L^2_w(0,T;H)$, so that
\[  {\left\{ (\bar{u}^\varphi)^\vee \, ,\, \varphi \in\, {\mathcal{S}}_{c}\right\}}\subset C([0,T];H)\ \ \text{is compact}.\] 
Now, recalling the definition of $I_{T}$,  for every $R>0$ we have
\begin{equation}\label{r3-5}\Phi_{T,R}=\{I_{T}\leq R\}=\{ (\bar{u}^\varphi)^\vee \, ,\, \varphi \in\, {\mathcal{S}}_{{\sqrt{2R}}}\}.\end{equation}
 {Indeed, if $u$ belongs to $\{\bar{u}^\varphi \ :\ \varphi \in\,\mathcal{S}_{\sqrt{2R}}\}$, then there exists $\hat{\varphi} \in\, {\mathcal{S}}_{{\sqrt{2R}}}$ such that $u=(\bar{u}^{\hat{\varphi}})^\vee$, so that $I_{T}(u)\leq R$. On the other hand, if $u \in\,\Phi_{T,R}$, then for any $\lambda>0$ there exists $\varphi_\lambda \in\, {\mathcal{S}}_{{\sqrt{2R}+\lambda}}$ such that $u=(\bar{u}^{\varphi_\lambda})^\vee$, and together with  \eqref{r3-4} this implies 
\[u \in\,\bigcap_{\lambda \in\,(0,1)}\left\{(\bar{u}^\varphi)^\vee\, ,\, \varphi \in\,\mathcal{S}_{\sqrt{2R}+\lambda}\right\}=	\left\{(\bar{u}^\varphi)^\vee\, ,\, \varphi \in\,\mathcal{S}_{\sqrt{2R}}\right\}.\]
Therefore,  from \eqref{r3-5}  and the compactness of $\{(\bar{u}^\varphi)^\vee \, ,\, \varphi \in\, {\mathcal{S}}_{{\sqrt{2R}}}\}$}, we conclude that Claim 2. holds.

\end{document}